\documentclass[a4paper]{amsart}
\usepackage{hyperref}
\usepackage{float,mathrsfs,amsmath,amssymb,amsthm,multirow,tabls,
	enumerate,braket,amscd}
\usepackage{graphicx}
\usepackage[all]{xy}

\usepackage{color}

\numberwithin{equation}{section}

\theoremstyle{plain}
\newtheorem{theorem}{Theorem}[section]

\newtheorem{proposition}[theorem]{Proposition}
\newtheorem{lemma}[theorem]{Lemma}

\theoremstyle{definition}
\newtheorem{definition}[theorem]{Definition}
\newtheorem*{acknowledgements}{Acknowledgements}

\theoremstyle{remark}
\newtheorem{remark}[theorem]{Remark}

\DeclareMathOperator{\Km}{\operatorname{Km}}
\DeclareMathOperator{\Ino}{\operatorname{Ino}}
\DeclareMathOperator{\Hom}{Hom}
\DeclareMathOperator{\rank}{rank}
\DeclareMathOperator{\Gal}{Gal}
\newcommand{\Pic}{\operatorname{Pic}}
\renewcommand{\div}{\operatorname{div}}
\newcommand{\I}{\mathrm{I}}
\newcommand{\II}{\mathrm{II}}

\newcommand{\IV}{\mathrm{IV}}
\newcommand{\Qbar}{{%
\kern.1em\overline{\kern-.1em\mathbf{Q}\kern-.1em}\kern.1em}}
\newcommand{\Obar}{{%
\kern.15em\overline{\kern-.15em O\kern-.0em}\kern.0em}}
\renewcommand{\theenumi}{\roman{enumi}}
\renewcommand{\labelenumi}{\hbox to 1.5em {\upshape(\hfil\theenumi\hfil)}}
\newcommand{\<}{\langle} 
\renewcommand{\>}{\rangle}

\title[Mordell-Weil lattice of Inose's elliptic $K3$ surface]
{Mordell-Weil lattice of Inose's elliptic $K3$ surface arising from the
product of $3$-isogenous elliptic curves}
\date{} 
\author{Masato Kuwata \and Kazuki Utsumi}
\address{Faculty of Economics,
  Chuo University \endgraf 742-1 Hachioji-shi, Tokyo 192-0393 Japan}
\email{kuwata@tamacc.chuo-u.ac.jp}
\address{College of Science and Engineering, Ritsumeikan
  University \endgraf 1-1-1 Noji-higashi, Kusatsu Shiga 525-8577 Japan}
\email{kutsumi@fc.ritsumei.ac.jp}

\subjclass[2010]{Primary 14J27, 14J28; Secondary 14H52, 11G05}
\keywords{$K3$ surface, elliptic surface, elliptic curve}

\begin{document}

\begin{abstract} 
From the product of two elliptic curves, Shioda and Inose \cite{Shioda-Inose} constructed an elliptic $K3$ surface having two $\II^*$ fibers.  Its Mordell-Weil lattice structure depends on the morphisms between the two elliptic curves. In this paper, we give a method of writing down generators of the Mordell-Weil lattice of such elliptic surfaces when two elliptic curves are $3$-isogenous. In particular, we obtain a basis of the Mordell-Weil lattice for the singular $K3$ surfaces $X_{[3,3,3]}$, $X_{[3,2,3]}$ and $X_{[3,0,3]}$.
\end{abstract}

\maketitle

\section{Introduction}

In the study of the geometry, arithmetic and moduli of $K3$ surfaces, elliptic $K3$ surfaces with large Picard number play a vital role. In 1977 Shioda and Inose \cite{Shioda-Inose} gave a classification of singular $K3$ surfaces, that is, $K3$ surfaces with maximum Picard number.  For this purpose, they constructed elliptic $K3$ surfaces $\mathcal{E}$ with two singular fibers of type $\II^{*}$ starting from the Kummer surface $\Km(E_{1}\times E_{2})$ with the product of two elliptic curves $E_{1}$ and $E_{2}$.  They constructed $\mathcal{E}$ as a double cover of $\Km(E_{1}\times E_{2})$ with certain properties (now called a Shioda-Inose structure).  Later, Inose \cite{Inose} gave an explicit model of such an elliptic $K3$ surface as a quartic surface in~$\mathbf{P}^{3}$, and remarked that it is the quotient of $\Km(E_{1}\times E_{2})$ by an involution.  We call the Kodaira-N\'eron model of $\mathcal{E}$ the Inose surface associated with $E_{1}$ and $E_{2}$, and denote it by $\Ino(E_{1}, E_{2})$.  We thus have a ``Kummer sandwich'' diagram:
\[
   \Km(E_{1} \times E_{2}) \overset{\pi_2}{\dashrightarrow}
   \Ino(E_{1}, E_{2}) \overset{\pi_1}{\dashrightarrow}
   \Km(E_{1} \times E_{2})
\] (cf. \cite{Shioda:Kummer-sandwich}).
Also, $\mathcal{E}$ as an elliptic surface with two $\II^{*}$ fibers is denoted by $F^{(1)}_{E_{1},E_{2}}$.  This notation reflects that it is a part of the construction of elliptic $K3$ surfaces of high rank by the first named author \cite{Kuwata:MW-rank}, where he constructed $F^{(n)}_{E_{1},E_{2}}$, $n=1,\dots,6$, which has various Mordell-Weil rank up to~$18$.

The structure of the Mordell-Weil lattice of $F^{(1)}_{E_{1},E_{2}}$ is known to be isomorphic to $\Hom(E_{1},E_{2})\<2\>$ if $E_{1}$ and $E_{2}$ are nonisomorphic (see \cite{Shioda:correspondence}).  Here, for a lattice $L$, we denote by $L\<n\>$ the lattice structure on $L$ with the pairing multiplied by~$n$.  However, given an isogeny $\varphi \in \Hom(E_{1},E_{2})$ and the Weierstrass equation of $F^{(1)}_{E_{1},E_{2}}$, it is quite difficult to write down the coordinates of the section corresponding to~$\varphi$, and it has been worked out only in limited cases (cf.~\cite{Shioda:F^(5)}, \cite{Kumar-Kuwata}). Most known examples fall into the case where the degree of isogeny $\varphi$ equals~$2$, in which case the calculations are straight forward. One particular example of the case $\deg\varphi=4$ is dealt in \cite[Example~9.2]{Kumar-Kuwata}. In this paper we consider a family of the pairs of elliptic curves $E_{1}$ and $E_{2}$ with an isogeny $\varphi:E_{1}\to E_{2}$ of degree~$3$ defined over~$k$.  We write down a formula of the section of $F^{(1)}_{E_{1},E_{2}}$ coming from $\varphi$ defined over the base field~$k$.  To do so, we first work with the surface $F^{(6)}_{E_{1},E_{2}}$, which has a simple affine model that can be viewed as a family of cubic curves with a rational point over~$k$.  We modify the method in \cite{Kumar-Kuwata} to find sections of $F^{(1)}_{E_{1},E_{2}}$.  We also give a section of $F^{(2)}_{E_{1},E_{2}}$ coming from the isogeny~$\varphi$, and give a basis defined over the field $k(E_{1}[2],E_{2}[2])$ when $E_{1}$ and $E_{2}$ do not have a complex multiplication.

In \S\ref{singular} we study some examples of singular $K3$ surfaces in detail.  In particular, we determine a basis of the MWL of the Inose surface $F_{E_{1},E_{2}}^{(1)}$ and that of $F_{E_{1},E_{2}}^{(2)}$ for the singular $K3$ surfaces $X_{[3,3,3]}$, $X_{[3,2,3]}$ and $X_{[3,0,3]}$ which correspond to the quadratic forms $3x^{2}+3xy+3y^{2}$, $3x^{2}+2xy+3y^{2}$, and $3x^{2}+3y^{2}$ respectively.

\begin{acknowledgements}
We would like to express sincere gratitude to Professor Ichiro Shimada for useful discussions. We would also like to thank Professor Hisanori Ohashi for his helpful comments. Furthermore, we would like to thank the referee for his/her profitable comments and corrections. The computer algebra system \verb|Maple| was used in the calculations for this paper. Kuwata was partially supported by JSPS KAKENHI Grant Number JP26400023, and by the Chuo University Grant for Special Research. Utsumi was partially supported by the Ritsumeikan University Research Promotion Program for Aquiring Grants in-Aid for Scientific Research.
\end{acknowledgements}

\section{Inose surface}\label{Inose}

Throughout this paper the base field~$k$ of algebraic varieties is assumed to be a number field.

Let $\Km(E_{1} \times E_{2})$ be the Kummer surface associated with the product of elliptic curves $E_{1}$ and $E_{2}$, that is, the minimal resolution of the quotient surface $E_{1} \times E_{2} / \{ \pm 1 \}$. If $E_{1}$ and $E_{2}$ are defined by the equations
\[
	E_{1} : y_1^2 = f_1(x_1), \quad E_{2} : y_2^2 = f_2(x_2),
\]
where $f_1(x_1)$ and $f_2(x_2)$ are cubic polynomials, an affine singular model of $\Km(E_{1} \times E_{2})$ is given as the hypersurface in $\mathbf{A}^{3}$ defined by the equation
\begin{equation}\label{eq:Cubic}
	f_2(x_2)=t^2 f_1(x_1),
\end{equation}
where $t= y_2 / y_1$. Then, the map $\Km(E_{1} \times E_{2}) \to \mathbf{P}^1$ induced by $(x_1, x_2, t) \mapsto t$ is a Jacobian fibration, which is sometimes called Inose's pencil (cf.~\cite{Kuwata-Shioda}).

Take a parameter $u$ such that $t=u^3$, and consider \eqref{eq:Cubic} as a family of cubic curves in $\mathbf{P}^{2}=\{(x_{1}:x_{2}:z)\}$, or a cubic curve over $k(u)$. Then, we see that it has a rational point $(x_{1}:x_{2}:z)=(1:u^2:0)$.  Using this point as the origin, we consider it as an elliptic curve over $k(u)$. In particular, if $f_{1}(x_{1})$ and $f_{2}(x_{2})$ are given by
\[
f_{1}(x_{1})=x_{1}^3+a_{2}x_{1}^{2}+a_{4}x_{1}+a_{6},\quad
\text{and}\quad
f_{2}(x_{2})=x_{2}^3+a'_{2}x_{2}^{2}+a'_{4}x_{2}+a'_{6},
\]
then we can convert \eqref{eq:Cubic} to the Weierstrass form 
\[
Y^2 = X^3 - \frac{1}{3} A\,X
	+ \frac{1}{64}\Bigl(\Delta_{E_{1}}u^{6}
		+B+\frac{\Delta_{E_{2}}}{u^{6}}\Bigr),
\]
where
\[
\left\{
\begin{aligned}
&A = (a_{2}^2-3a_{4})({a'_{2}}^2-3a'_{4}), \\
&B = \frac{32}{27}(2a_{2}^3-9a_{2}a_{4}+27a_{6})(2{a'_{2}}^3-9a'_{2}a'_{4}+27a'_{6}), \\
&\Delta_{E_{1}} = 16
(a_{2}^2a_{4}^2-4a_{2}^3a_{6}+18a_{2}a_{4}a_{6}-4a_{4}^3-27a_{6}^2), \\
&\Delta_{E_{2}} =16({a'_{2}}^2{a'_{4}}^2-4{a'_{2}}^3a'_{6}+18a'_{2}a'_{4}a'_{6}
-4{a'_{4}}^3-27{a'_{6}}^2).
\end{aligned}
\right.
\]

Let $s=t^{2}=u^{6}$.  Define $F^{(1)}_{E_{1} E_{2}}$ to be
\begin{equation}\label{eq:F1}
	F^{(1)}_{E_{1} E_{2}} :
	Y^2 = X^3 - \frac{1}{3} A\,X 
	+ \frac{1}{64}\Bigl(\Delta_{E_{1}}s
		+B+\frac{\Delta_{E_{2}}}{s}\Bigr).
\end{equation}
This Jacobian fibration has two reducible fibers of type $\II^*$ at $s=0$ and $s=\infty$. \begin{definition}[cf.~\cite{Kumar-Kuwata}] The Kodaira-N\'{e}ron model of the elliptic surface $F^{(1)}_{E_{1},E_{2}}$ over $k$ defined by (\ref{eq:F1}) is called the Inose surface associated with $E_{1}$ and $E_{2}$, and it is denoted by $\Ino(E_{1}, E_{2})$.
\end{definition}

\begin{remark}
Over a certain extension of~$k$, the equation of $F^{(1)}_{E_{1}, E_{2}}$ may be given by
\[
F^{(1)}_{E_{1} E_{2}} : 
Y^2 = X^3  -  3 \sqrt[3]{J_1 J_2} \ X 
	+ s+\frac{1}{s} - 2\sqrt{(1-J_1)(1-J_2)},
\]
where $J_i = j(E_{i})/1728$ ($i=1,2$) with $j(E_{i})$ the $j$-invariant of $E_{i}$ (cf. \cite{Inose}, \cite{Shioda:correspondence}).
\end{remark}

\begin{definition}
For $n \geq 1$, the elliptic surface $F^{(n)}_{E_{1}, E_{2}}$ over $k$ is defined by
\begin{equation}\label{eq:Fn}
F^{(n)}_{E_{1} E_{2}} :
Y^2 = X^3 - \frac{1}{3} A\,X 
	+ \frac{1}{64}\Bigl(\Delta_{E_{1}}s^{n}
		+B+\frac{\Delta_{E_{2}}}{s^{n}}\Bigr).
\end{equation}
\end{definition}

\begin{remark}
(1) 
The Kodaira-N\'{e}ron model of $F^{(n)}_{E_{1}, E_{2}}$ is a $K3$ surface for $n=1, \ldots, 6$, but not for $n \geq 7$ (\cite{Kuwata:MW-rank}).

(2) 
Since $u^{6}=t^{2}$, Inose's pencil on $\Km(E_{1}\times E_{2})$ is isomorphic to $F^{(2)}_{E_{1},E_{2}}$.  However, the isomorphism between \eqref{eq:Cubic} and \eqref{eq:Fn} for $n=2$ may not be defined over $k$ itself.
\end{remark}

\section{Mordell-Weil lattice of the Inose surface}\label{MWL}

In this section we give a summary of known facts on the Mordell-Weil lattice of $F^{(1)}_{E_{1},E_{2}}$ over~$\bar k(s)$.

\begin{theorem}\cite[Theorem\ 6.3]{Shioda:correspondence} \label{thm:MWrank}
The Jacobian fibration $F^{(1)}_{E_{1}, E_{2}}$ on $\Ino(E_{1},E_{2})$ has two singular fibers of type $\II^*$ at $s=0$ and $\infty$, and the other singular fibers and its Mordell-Weil rank are given in the table below. Here, $j_{i}=j(E_{i})$, $i=1,2$, are the $j$-invariants, and $h$ is the rank of $\Hom_{\bar k}(E_{1}, E_{2})$.
\[
\begin{tabular}{l|c|c}
\hline
\hfil $j$-invariants & singular fibers & Mordell-Weil rank\\
\hline
$j_1 \neq j_2, j_1 j_2 \neq 0$ & $4\,\I_1$ & $h$\\
$j_1 \neq j_2, j_1j_2=0$ & $2\,\II$  & $h$ \\
$j_1=j_2 \neq 0,1728$ & $\I_2, 2\,\I_1$  & $h-1$ \\
$j_1=j_2=1728$ & $2\,\I_2$  & $0$\\
$j_1=j_2=0$ & $\IV$ & $0$ \\
\hline
\end{tabular}
\]
Assume $j_1 \neq j_2$. Then, the Mordell-Weil group $F^{(1)}_{E_{1}, E_{2}}(\Qbar(s))$ is torsion-free, and isomorphic to the lattice $\Hom_{\bar k}(E_{1},E_{2})\<2\>$, where the pairing of $\Hom_{\bar k}(E_{1}, E_{2})$ is given by
\[
(\varphi, \psi) = \frac{1}{2}\left( \deg(\varphi+\psi) -\deg
\varphi -\deg \psi \right) \quad \varphi, \psi \in \Hom_{\bar k}(E_{1},
E_{2}). 
\] 
\end{theorem}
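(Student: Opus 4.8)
The plan is to prove the statement in three parts: (i) the determination of the singular fibres, (ii) the computation of the Mordell--Weil rank, and (iii) the identification of the lattice with $\Hom_{\bar k}(E_1,E_2)\langle 2\rangle$.

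For the singular fibres I would read everything off the Weierstrass equation \eqref{eq:F1}. Writing the right-hand side as $X^3+pX+q(s)$ with $p=-A/3$ constant in $s$ and $q(s)=\tfrac1{64}(\Delta_{E_1}s+B+\Delta_{E_2}/s)$, the discriminant of the fibration is proportional to $4p^3+27q(s)^2$. A local Tate analysis at $s=0$ and $s=\infty$ (where the $\Delta_{E_2}/s$ term forces an additive fibre of valuation $10$) gives the two fibres of type $\II^*$ already recorded after \eqref{eq:F1}. Away from $0,\infty$ I would count by the Euler-number relation $\sum_v e(F_v)=e(X)=24$ valid for a K3 elliptic surface: the two $\II^*$ fibres account for $2\cdot 10=20$, leaving total Euler number $4$. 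When $A\neq 0$, i.e. $j_1,j_2\neq 0$, the polynomial $s^2\bigl(4p^3+27q^2\bigr)$ has degree $4$ with no root at $s=0$, so generically four simple roots give $4\,\I_1$; the coincidences among these roots are governed by $j$-invariant relations and produce the table's degenerations, $j_1=j_2\neq 0,1728$ merging one pair into $\I_2$ (hence $\I_2+2\,\I_1$) and $j_1=j_2=1728$ giving $2\,\I_2$. When $A=0$, i.e. one $j_i=0$, the term $4p^3$ vanishes, the discriminant becomes the square of the quadratic $\Delta_{E_1}s^2+Bs+\Delta_{E_2}$, and at a simple zero of $q$ Tate's algorithm gives type $\II$; this yields $2\,\II$ when $j_1\neq j_2$, while $j_1=j_2=0$ forces that quadratic to be a perfect square, its double root producing the single fibre $\IV$. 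Each case I would confirm by substituting the relevant normal forms and running Tate's algorithm.

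For the rank I would combine the Shioda--Tate formula with the Shioda--Inose structure. Shioda--Tate gives $\rho(\Ino(E_1,E_2))=2+\sum_v(m_v-1)+\rank\mathrm{MW}$, where each $\II^*$ contributes $8$ (so $\sum_v(m_v-1)=16$ plus $0$ for $\I_1,\II$, $1$ per $\I_2$, and $2$ for $\IV$). On the other hand the Kummer-sandwich relation gives $\rho(\Ino(E_1,E_2))=\rho(\Km(E_1\times E_2))=18+h$, using $\operatorname{NS}(E_1\times E_2)\cong \mathbf{Z}^2\oplus\Hom_{\bar k}(E_1,E_2)$ together with the $16$ classes of the resolved nodes. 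Equating the two expressions yields $\rank\mathrm{MW}=h$ in the irreducible cases, $h-1$ when an extra $\I_2$ appears, and $0$ in the two CM cases $j_1=j_2\in\{0,1728\}$, where $h=2$ is exactly cancelled by the reducible-fibre contribution; this reproduces every entry of the table.

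Torsion-freeness is then immediate: the torsion subgroup injects into $\prod_v G_v$, and for the fibres here the $\II^*$ fibre has unimodular $E_8$ with trivial $G_v$ while $\I_1,\II$ are irreducible, so every $G_v$ is trivial, the group is torsion-free, and every section is narrow. Hence the Mordell--Weil lattice is isometric to the orthogonal complement $(U\oplus E_8\oplus E_8)^\perp$ of the trivial lattice inside $\operatorname{NS}(\Ino(E_1,E_2))$, with the induced sign-reversed pairing. To identify this complement I would use unimodularity of the K3 lattice $U^3\oplus E_8^2$: the sublattice $U\oplus E_8^2$ splits off primitively, so the complement equals the orthogonal complement of the transcendental lattice $T(\Ino)$, and the Shioda--Inose structure gives $T(\Ino)\cong T(E_1\times E_2)$, which is the orthogonal complement of $\Hom_{\bar k}(E_1,E_2)$ in the relevant copy of $U^3$; tracking the scale introduced by the degree-$2$ Shioda--Inose cover yields the factor $\langle 2\rangle$ and unwinds the pairing to $(\varphi,\psi)=\tfrac12(\deg(\varphi+\psi)-\deg\varphi-\deg\psi)$. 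A more hands-on alternative is to build from each $\varphi$ the section $P_\varphi$ via the graph correspondence and to evaluate the height pairing $\langle P_\varphi,P_\psi\rangle=2+(P_\varphi\cdot O)+(P_\psi\cdot O)-(P_\varphi\cdot P_\psi)$, where all local correction terms vanish, checking that intersection numbers translate into isogeny degrees. The main obstacle is precisely this last step: the rank count only gives a finite-index embedding $\Hom_{\bar k}(E_1,E_2)\langle 2\rangle\hookrightarrow\mathrm{MW}$, and upgrading it to an isometry — in particular pinning the scaling factor to be exactly $2$ — requires the discriminant bookkeeping (or the intersection-number computation) to be carried out carefully, since it is here that the geometry of the Shioda--Inose double cover, rather than mere rank considerations, enters.
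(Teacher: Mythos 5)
Your proposal cannot be compared against an in-paper argument, because the paper offers none: Theorem~\ref{thm:MWrank} is imported verbatim from \cite[Theorem~6.3]{Shioda:correspondence}, so the benchmark is Shioda's proof. Measured against that, the first three of your steps are sound and standard: the fibre table (Tate's algorithm at $s=0,\infty$ giving $v(\Delta)=10$, plus the Euler-number budget $24-2\cdot 10=4$ distributed as $4\,\I_1$, $2\,\II$, $\I_2+2\,\I_1$, $2\,\I_2$, or $\IV$); the rank count via Shioda--Tate together with $\rho(\Ino(E_1,E_2))=\rho(\Km(E_1\times E_2))=18+h$; and torsion-freeness from the injection of $\mathrm{MW}/\mathrm{MW}^0$ into $\prod_v G_v$, all component groups being trivial when $j_1\neq j_2$, so that every section is narrow and $\mathrm{MW}\cong(\mathrm{Triv}^{\perp}\cap\mathrm{NS})\langle -1\rangle$.

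The genuine gap is in the last step, and it is larger than your closing caveat concedes. Your primary route argues: $\mathrm{MW}\langle-1\rangle$ is the complement of $T(\Ino)$ inside $\mathrm{Triv}^{\perp}\cong U\oplus U$, the Shioda--Inose structure gives $T(\Ino)\cong T(E_1\times E_2)$, and $T(E_1\times E_2)$ is the complement of the graph lattice inside another copy of $U\oplus U$; hence the complements agree. But isometric lattices can be primitively embedded in $U\oplus U$ in inequivalent ways: Nikulin's uniqueness theorem needs the strict inequality $l_+>t_+$ of positive signatures, whereas here both equal~$2$. What the argument actually delivers is that $\mathrm{MW}\langle-1\rangle$ and $\Hom_{\bar k}(E_1,E_2)\langle-2\rangle$ share signature and discriminant form, i.e.\ lie in one genus --- which for rank-$2$ positive-definite even lattices (the CM case $h=2$, exactly the case of the examples in \S\ref{singular}) does not imply isometry. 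Moreover, the scaling factor is misattributed: the defining property of a Shioda--Inose structure is an \emph{unscaled} isometry $T(\Ino)\cong T(E_1\times E_2)$, so no factor enters from the degree-$2$ cover; the $\langle 2\rangle$ is already present on the abelian side, in the self-intersection $\gamma_\varphi^2=-2\deg\varphi$ of the projected graph classes in $\mathrm{NS}(E_1\times E_2)$ (equivalently, in Shioda's own route, it arises as $4/2$: the Inose pencil $F^{(2)}$ on the Kummer surface carries $\Hom_{\bar k}(E_1,E_2)\langle 4\rangle$ as in Theorem~\ref{thm:F2-Shioda}, and heights are halved on descending through the degree-$2$ base change $s=t^2$ to $F^{(1)}$). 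Consequently the ``hands-on alternative'' you mention --- producing $P_\varphi$ from the graph $\Gamma_\varphi$ through the Kummer sandwich and verifying $\langle P_\varphi,P_\psi\rangle=2(\varphi,\psi)$ with the height formula --- is not an optional refinement of the lattice bookkeeping; it is the proof, including the construction of the map $\varphi\mapsto P_\varphi$ without which even your claimed finite-index embedding does not exist, and it is precisely the part left unexecuted.
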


\begin{remark}
The notation $\<n\>$ means that the paring of the lattice is multiplied by~$n$.
\end{remark}

\section{3-isogenies of elliptic curves}\label{3iso}

We recall some general facts on $3$-isogenies between elliptic curves following J.~Top \cite[\S3]{Top}.  

\begin{lemma}\label{lem:3-isog}
Let $E$ be an elliptic curve over a number field $k$, and $G\subset E(\bar k)$ a subgroup of order three that is stable under the action of $\Gal(\bar k/k)$. 
Then, the pair of $E$ and $G$ is one of the following:
\begin{itemize}
\item[(i)]
$E$ is given by $y^{2}=x^{3}+d$ and $G$ is generated by $P=(0,\sqrt{d})$.
\item[(ii)]
$E$ is given by $y^{2}=x^{3}+a(x-b)^{2}$ and $G$ is generate by $P=(0,b\sqrt{a})$.
\end{itemize}
\end{lemma}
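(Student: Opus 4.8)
The plan is to reduce $(E,G)$ to a Weierstrass model in which the $x$-coordinate of the order-three point sits at the origin, and then to read off the two cases from the vanishing or non-vanishing of a single coefficient. First I would note that a subgroup of order three is cyclic, say $G=\{O,P,-P\}$ with $P$ of order three, and that $O$ is a flex of the plane cubic, so that $3P=O$ is equivalent to $P$ being a flex. The Galois-stability of $G$ means that every $\sigma\in\Gal(\bar k/k)$ permutes the two nontrivial points $P$ and $-P$; since negation preserves $x$-coordinates, $\sigma(x(P))=x(P)$ for all $\sigma$, whence $x(P)\in k$. Writing $E$ over $k$ in the form $y^{2}=x^{3}+a_{2}x^{2}+a_{4}x+a_{6}$ (possible as $k$ has characteristic zero) and translating $x$ by the element $x(P)\in k$, I may assume $x(P)=0$, so $P=(0,y_{0})$ with $y_{0}^{2}=a_{6}$; here $y_{0}\neq 0$, since otherwise $P=(0,0)$ would be $2$-torsion on a singular cubic.

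Next I would translate the flex condition into a relation among the coefficients. Computing the tangent line $y=y_{0}+mx$ at $P$ gives $m=a_{4}/(2y_{0})$, and substituting into the equation of $E$, the intersection is governed by
\[
x^{2}\bigl(x+a_{2}-m^{2}\bigr)=0 .
\]
The flex condition (third intersection again at $x=0$) is therefore $m^{2}=a_{2}$, which after clearing denominators and using $y_{0}^{2}=a_{6}$ becomes $a_{4}^{2}=4\,a_{2}a_{6}$.

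Finally I would split according to whether $a_{4}=0$. If $a_{4}=0$, then $4a_{2}a_{6}=0$ forces $a_{2}=0$ (as $a_{6}=y_{0}^{2}\neq 0$), and $E$ becomes $y^{2}=x^{3}+a_{6}$ with $P=(0,\sqrt{a_{6}})$, which is case~(i) with $d=a_{6}$. If $a_{4}\neq 0$, then setting $a=a_{2}$ and $b=-a_{4}/(2a_{2})$ one checks, using $a_{4}^{2}=4a_{2}a_{6}$, that $a_{2}x^{2}+a_{4}x+a_{6}=a(x-b)^{2}$, so that $E$ becomes $y^{2}=x^{3}+a(x-b)^{2}$ with $P=(0,b\sqrt{a})$, which is case~(ii).

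I expect the genuinely careful points to be bookkeeping rather than conceptual. The essential input is that all reductions (passage to short Weierstrass form and the translation of $x$) are defined over $k$, which rests precisely on the Galois-stability of $G$ through the fact that $x(P)\in k$. The remaining care is in the non-degeneracy checks, namely $y_{0}\neq 0$ throughout and $a\neq 0$, $b\neq 0$ in case~(ii), so that each normal form is a genuine elliptic curve; the computation of the flex condition itself is routine.
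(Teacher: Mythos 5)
Your proof is correct and follows essentially the same route as the paper's: use Galois stability to get $x(P)\in k$, translate so that $x(P)=0$, impose the flex/tangency condition at $P$ to obtain $a_{4}^{2}=4a_{2}a_{6}$ (the paper's $c^{2}=4ad$), and split into cases according to whether the linear coefficient vanishes. One minor slip in a side remark: the reason $y_{0}\neq 0$ is simply that a point with vanishing $y$-coordinate is $2$-torsion, contradicting that $P$ has order $3$ --- $E$ is nonsingular, so the reference to a ``singular cubic'' is spurious (and unneeded).
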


\begin{proof}
$E$ can be given by an equation of the form $y^{2}=f(x)$ with $\deg f=3$.  Then, $G$ consists of three points $O=(0:1:0)$, $P=(\alpha,\beta)$, and $2P=-P=(\alpha,-\beta)$ with $\beta\neq0$.  Since $G$ is Galois invariant, we have $\alpha\in k$ and $\beta^{2}\in k$.  Replacing $x$ by $x+\alpha$ if necessary, we may assume $\alpha=0$.  The curve is now given by an equation $y^{2} = x^{3} + ax^{2} + cx + d$, and $P=(0,\sqrt{d})$.  The tangent line at $P$ is given by $y=cx/(2\sqrt{d})+\sqrt{d}$, and this tangent line intersects with $E$ at $P$ with multiplicity~$3$ if and only if $c^{2} = 4ad$.
If $c = 0$, our equation is $y^{2} = x^{3} + d$. If $c\neq 0$, the equation can be written as $y^{2} = x^{3} + a(x - b)^{2}$, and $P=(0,b\sqrt{a})$.
\end{proof}

If $E$ is given by $y_{1}^{2}=x_{1}^{3}+d$ and $P=(0,\sqrt{d})$, then the quotient $E/G$ is given by the equation 
\[
E/G: y_{2}^{2}=x_{2}^{3}-27d,
\]
and the quotient map $\varphi:E\to E/G$ is given by
\[
\varphi:(x_{1},y_{1})\mapsto (x_{2},y_{2})
=\left(\frac{x_{1}^{3}+4d}{x_{1}^{2}},
\frac{(x_{1}^{3}-8d)y_{1}}{x_{1}^{3}}\right).
\]
In this case $E$ and $E/G$ are isomorphic over $\bar k$, and the $j$-invariants are equal to~$0$.

If $E$ is given by
\[
  E:  y_1^2=x_1^3+a(x_1-b)^2,
\]
and $P=(0, b\sqrt{a})$, then the quotient $E/G$ is given by
\[
  E/G : y_2^2=x_2^3-3a\bigl(x_2-(4a+27b)/9\bigr)^2.
\]
The isogeny $\varphi : E \to E/G$ is given by $\varphi(x_1, y_1) = (\varphi_x(x_1), \varphi_y(x_1) y_1)$, where
\begin{equation}
\label{eq:phi}
\varphi_x(x_1)=\frac{3x_1^3+4ax_1^2-12abx_1+12ab^2}{3x_1^2}, \quad
\varphi_y(x_1)=-\frac{x_1^3+4abx_1-8ab^2}{x_1^3}.
\end{equation}

\section{The rational section of $F_{E_{1},E_{2}}^{(1)}$ arising from a 3-isogeny}\label{sec:F1}

In this section, we assume $E_{1}$ and $E_{2}$ are $3$-isogenous over~$k$, and we find an explicit section of $F_{E_{1},E_{2}}^{(1)}$.

In the case where the $j$-invariants of both $E_{1}$ and $E_{2}$ are equal to~$0$, the Mordell-Weil group $F_{E_{1},E_{2}}^{(1)}(\bar k(s))$ is trivial by Theorem~\ref{thm:MWrank}, and we have nothing to do.

As for the second case in \S4, suppose that $E_{1}$ and $E_{2}$ are given by
\begin{equation}\label{eq:E1andE2}
\begin{aligned}
&E_{1}:y_{1}^{2}=x_{1}^{3}+a(x_{1}-b)^2, \\
&E_{2}:y_{2}^{2}=x_{2}^{3}+a'\bigl(x_{2}-b'\bigr)^2,
\end{aligned}
\end{equation}
where $a,b\in k$, $a'=-3a$, and $b'=(4a+27b)/9$. We work with the cubic curve over $k(u)$ in $\mathbf{P}^{2}=\{(x_{1}:x_{2}:z)\}$ given by
\begin{equation}\label{eq:C_u}
C_{u}:x_{2}^{3}+a'\bigl(x_{2}-b'z\bigr)^2z
	=u^{6}\bigl(x_{1}^{3}+a(x_{1}-bz)^2z\bigr),
\end{equation}
which is isomorphic over $k(u)$ to $F_{E_{1},E_{2}}^{(6)}$ with the choice of origin $O=(1:u^{2}:0)$. Its Weierstrass equation is given by
\begin{multline}\label{eq:WeierF6}
F_{E_{1},E_{2}}^{(6)}:Y^{2} = X^3
-\frac{1}{3}aa'(a+6b)(a'+6b')X
\\
-\frac{1}{4}\left(a^{2}b^{3}(4a+27b)u^{6} 
+\frac{a'^{2}b'^{3}(4a'+27b')}{u^{6}}\right)
\\
+\frac{1}{54}aa'
\bigl(3(a+3b)^2-a^{2}\bigr)\bigl(3(a'+3b')^2-a'^{2}\bigr).
\end{multline}
The change of coordinates are given by
\begin{equation}\label{eq:change-var}
\left\{\begin{aligned}
X&=\frac{c_{6}u^{6}+c_{4}u^{4}+c_{2}u^{2}+c_{0}}%
{3u^2((3x_{1}+az)u^2-(3x_{2}+a'z))},
\\[\medskipamount]
Y&=\frac{d_{10}u^{10}+d_{6}u^{6}+d_{4}u^{4}+d_{0}}%
{6u^3\bigl((3x_{1}+az)u^2-(3x_{2}+a'z)\bigr)^2},
\end{aligned}\right.
\end{equation}
where
\begin{gather*}
\hbox to 0.87\textwidth{
$c_{6}=\ 2a(a+6b)(3x_{1}+az)-a\bigl(3(a+3b)^2-a^{2}\bigr)z,$\hfil}
\\
\hbox to 0.87\textwidth{
$c_{4}=\ \ a(a+6b)(3x_{2}+a'z),$\hfil}
\\
\hbox to 0.87\textwidth{
$c_{2}=-a'(a'+6b')(3x_{1}+az),$\hfil}
\\
\hbox to 0.87\textwidth{
$c_{0}=-2a'(a'+6b')(3x_{2}+a'z)+a'\bigl(3(a'+3b')^2-a'^{2}\bigr)z,$\hfil}
\\[\medskipamount]
\hbox to 0.87\textwidth{
$d_{10}=
-a\bigl(3(a+3b)^2-a^{2}\bigr)\bigl((3x_{1}+az)^2+2a(a+6b)z^2\bigr)$\hfil}
\\
\hbox to 0.87\textwidth{\hfil 
$+6a^2(a+6b)^2(3x_{1}+az)z,$}
\\
\hbox to 0.87\textwidth{
$d_{6}\,=\,a\bigl(3(a+3b)^2-a^{2}\bigr)\bigl((3x_{2}+a'z)^2+2a'(a'+6b')z^2\bigr)$\hfil}
\\
\hbox to 0.87\textwidth{\hfil
$-\,6aa'(a+6b)(a'+6b')(3x_{1}+az)z,$}
\\
\hbox to 0.87\textwidth{
$d_{4}\,=\,a'\bigl(3(a'+3b')^2-a'^{2}\bigr)\bigl((3x_{1}+az)^2+2a(a+6b)z^2\bigr)$\hfil}
\\
\hbox to 0.87\textwidth{\hfil
$-6aa'(a+6b)(a'+6b')(3x_{2}+a'z)z,$}
\\
\hbox to 0.87\textwidth{
$d_{0}\,=-a'\bigl(3(a'+3b')^2-a'^{2}\bigr)\bigl((3x_{2}+a'z)^2+2a'(a'+6b')z^2\bigr)$\hfil}
\\
\hbox to 0.87\textwidth{\hfil
$+6a'^2(a'+6b')^2(3x_{2}+a'z)z.$}
\end{gather*}

\begin{remark}
The origin $O$ in our case is not an inflection point of the cubic curve $C_{u}$.  Thus, three collinear points $P$, $Q$, $R\in C_{u}$ do not satisfy the equation $P+Q+R=O$ under the group law. Instead, we have  $P+Q+R=\Obar$, where $\Obar$ is the third point of intersection between $C_{u}$ and the tangent line at~$O$.
\end{remark}
 
\begin{remark}\label{rmk:involution}
By definition, the surface $F_{E_{1},E_{2}}^{(1)}$ is obtained as the quotient of \eqref{eq:WeierF6} by the automorphism $(X,Y,u)\mapsto (X,Y,-\omega u)$, where $\omega$ is a third root of unity. It should be noted that the automorphism $\bigl((x_{1}:x_{2}:z),u\bigr)\mapsto \bigl((x_{1}:x_{2}:z),-\omega u\bigr)$ on $C_{u}$ does not correspond to this automorphism since the quotient by the latter gives a rational surface. 
\end{remark}

Replacing $x_{2}$ in \eqref{eq:C_u} by $\varphi_{x}(x_{1})$ given by \eqref{eq:phi} , we obtain
\[
\frac{1}{x_{1}^{6}}\bigl(x_1^3+a(x_1z-bz)^2z\bigr)
\left((x_1^3+4abx_1z^{2}-8ab^2z^{3})^{2}-(u^{3}x_{1}^{3})^{2}\right)
=0.
\]
Define two homogeneous polynomials in $x_{1}, z$ by
\begin{align*}
&p_{\varphi}^{+}(x_{1},z)=x_1^3+4abx_{1}z^{2}-8ab^2z^{3}-u^{3}x_{1}^{3}, 
\\
&p_{\varphi}^{-}(x_{1},z)=x_1^3+4abx_{1}z^{2}-8ab^2z^{3}+u^{3}x_{1}^{3}.
\end{align*}
Let $Q_{1}^{+},Q_{2}^{+}$, and $Q_{3}^{+}$ be points on $C_{u}$ of the form $(x_{1},\varphi_{x}(x_{1}))$ whose $x_{1}$-coordinates  are the three roots of $p_{\varphi}^{+}(x_{1},z)=0$.  Define $D_{\varphi}^{+}=Q_{1}^{+}+Q_{2}^{+}+Q_{3}^{+}$.  We look for a conic $\mathcal{Q}^{+}$ given by
\begin{equation}\label{eq:quadratic}
\mathcal{Q}^{+}:
q^{+}(x_{1},x_{2},z)
=c_{1}x_{1}^{2}+c_{2}x_{1}x_{2}+c_{3}x_{2}^{2}
+c_{4}x_{1}z+c_{5}x_{2}z+c_{6}z^{2}=0
\end{equation}
which passes through $Q_{1}^{+},Q_{2}^{+},Q_{3}^{+}$, and is tangent to the line $\mathcal{T}$ of $C_{u}$ at the origin~$O$ (see Figure~\ref{fig:1}).

\begin{figure}[t]
  \centering
  \includegraphics[width=\textwidth]{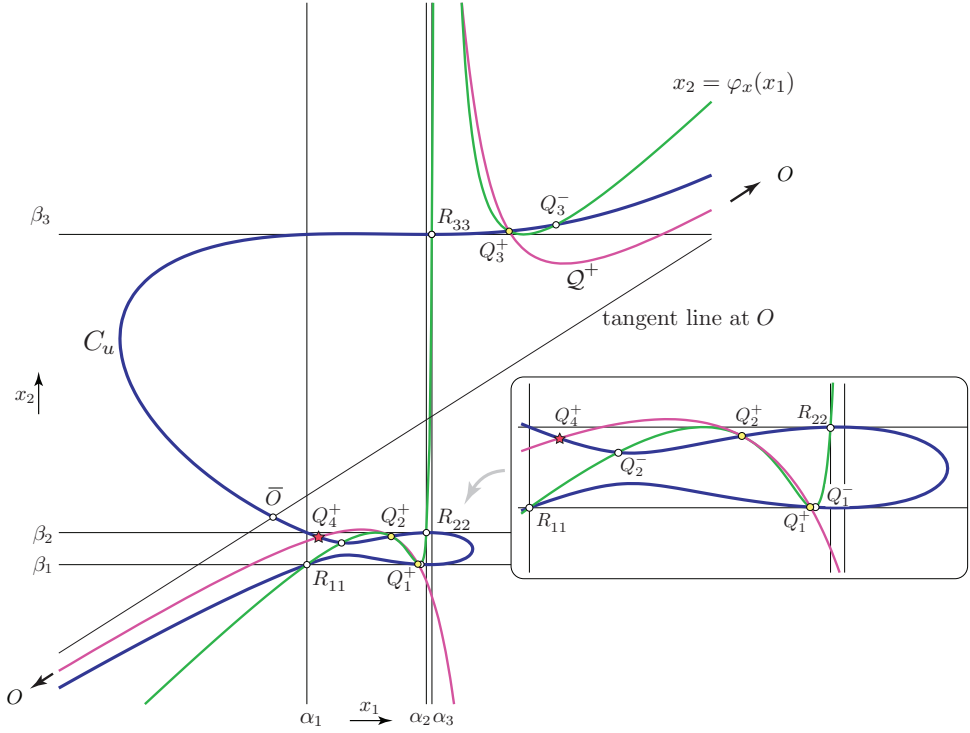}
  \caption{The $x_1$-coordinate of $Q_{i}^{\pm}$ ($i=1,2,3$) is a solution of the cubic equation $\varphi_y(x_1) = \pm u^{3}$ in $\overline{k(u)}$. A conic $\mathcal{Q}^{+}$, which passes through $Q_1^{+}$, $Q_2^{+}$, $Q_3^{+}$, and tangent to the cubic $C_{u}$ \eqref{eq:C_u} at~$O$, intersects with $C_{u}$ at the sixth point $Q_4^{+}$.}
  \label{fig:1}
\end{figure}

The tangent line $\mathcal{T}$ is given by
\begin{equation}\label{eq:tangent}
\mathcal{T} : l(x_{1},x_{2},z)=3u^2x_{1}-x_{2}+a(u^2+3)z=0.
\end{equation}
The condition that  $\mathcal{T}$ is tangent to $\mathcal{Q}^{+}$ is obtained as follows.  Regard $q^{+}(x_{1},x_{2},z)$ and $l(x_{1},x_{2},z)$ as polynomials in $x_{2}$, and compute the remainder of $q^{+}(x_{1},x_{2},z)$ divided by $l(x_{1},x_{2},z)$. Then, $\mathcal{T}$ is tangent to $\mathcal{Q}^{+}$ at $O$ if and only if the coefficients of $x_{1}z$ and $x_{1}^{2}$ vanish. Similarly, $\mathcal{Q}^{+}$ passes through $Q_{1}^{+},Q_{2}^{+},Q_{3}^{+}$ if and only if $q^{+}(x_{1},\varphi_{x}(x_{1}),z)$ is divisible by $p_{\varphi}^{+}(x_{1},z)$.  We then obtain a system of five homogeneous linear equations in $c_{1},\dots,c_{6}$.  It is easy to solve this system to find a conic $\mathcal{Q}^{+}$. By B\'ezout's theorem, the intersection $\mathcal{Q}^{+}$ and $C_{u}$ consists of six points counting multiplicity.  It is not difficult to find the sixth point $Q_{4}^{+}$, but the result is rather complicated to show here.

Similarly, we obtain $Q_{4}^{-}$ starting from the divisor $D_{\varphi}^{-}=Q_{1}^{-}+Q_{2}^{-}+Q_{3}^{-}$ obtained from $p_{\varphi}^{-}(x_{1},z)$.

\begin{proposition}
Let $D_{\varphi}^{+}$
\textup{(}resp.~$D_{\varphi}^{-}$\textup{)} be the divisor on the cubic curve $C_{u}$ defined by the equation $p_{\varphi}^{+}(x_{1},z)=0$ \textup{(}resp.~$p_{\varphi}^{-}(x_{1},z)=0$\textup{)}.
\begin{enumerate}[{\rm (i)}]
\item The divisor $D_{\varphi}^{+}$ \textup{(}resp.~$D_{\varphi}^{-}$\textup{)} determines a $k$-rational point $P_{\varphi}^{+}$ \textup{(}resp.~$P_{\varphi}^{-}$\textup{)} in
$F^{(6)}_{E_{1}, E_{2}}(k(u))$.
\item $P_{\varphi}^{+} - P_{\varphi}^{-}$ is in the image of $F^{(1)}_{E_{1}, E_{2}}(k(s)) \to F^{(6)}_{E_{1}, E_{2}}(k(u))$ induced by $s \mapsto u^6$. The height of its pre-image in $F^{(1)}_{E_{1},E_{2}}(k(s))$ is $6$.
\end{enumerate}
\end{proposition}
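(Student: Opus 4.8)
The plan is to establish (i) by the explicit conic construction and (ii) by exploiting the covering $F^{(6)}_{E_1,E_2}\to F^{(1)}_{E_1,E_2}$ of Remark~\ref{rmk:involution}.

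For (i), I would first note that $p_\varphi^+(x_1,z)=(1-u^3)x_1^3+4abx_1z^2-8ab^2z^3$ has coefficients in $k(u)$, so its three roots cut out an effective divisor $D_\varphi^+$ of degree~$3$ on $C_u$ that is stable under $\Gal(\overline{k(u)}/k(u))$, even though the individual points $Q_i^+$ need not be rational. Consequently $[D_\varphi^+-3O]\in\Pic^0(C_u)(k(u))$, and since $O=(1:u^2:0)$ is $k(u)$-rational, the Abel--Jacobi map based at $O$ sends this class to a $k(u)$-rational point $P_\varphi^+$, which lands in $F^{(6)}_{E_1,E_2}(k(u))$ through the $k(u)$-isomorphism $C_u\cong F^{(6)}_{E_1,E_2}$. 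The conic $\mathcal Q^+$ then makes $P_\varphi^+$ explicit: by B\'ezout it meets $C_u$ in $D_\varphi^+ + 2O + Q_4^+\sim 2H$, and combining this with $2O+\Obar\sim H$ yields $P_\varphi^+=2\Obar\ominus Q_4^+$ in the group law, with $Q_4^+$ and $\Obar$ both $k(u)$-rational. The divisor $D_\varphi^-$ is handled identically.

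For the first half of (ii), recall that $F^{(1)}_{E_1,E_2}=F^{(6)}_{E_1,E_2}/\langle\tau\rangle$ with $\tau:(X,Y,u)\mapsto(X,Y,-\omega u)$, and that the Weierstrass coefficients of $F^{(6)}_{E_1,E_2}$ lie in $k(u^6)=k(s)$; hence the image of $F^{(1)}_{E_1,E_2}(k(s))\to F^{(6)}_{E_1,E_2}(k(u))$ is precisely the subgroup of $\tau$-invariant points. Now $\tau$ sends $u^3\mapsto -u^3$, which interchanges $p_\varphi^+$ with $p_\varphi^-$ and hence $D_\varphi^+$ with $D_\varphi^-$, while simultaneously moving $O$ to another of the three points of $C_u$ at infinity. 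Tracking only the divisors would suggest $\tau^*(P_\varphi^+-P_\varphi^-)=-(P_\varphi^+-P_\varphi^-)$; the point I expect to be the main obstacle is that the $u$-dependence of the change of coordinates \eqref{eq:change-var} makes the conjugated Weierstrass identification differ from the original one by inversion (up to a translation, which acts trivially on $\Pic^0$). This extra sign is exactly what converts the apparent anti-invariance into genuine invariance, so that $\tau^*(P_\varphi^+-P_\varphi^-)=P_\varphi^+-P_\varphi^-$ and the difference descends. I would pin this down by substituting \eqref{eq:change-var} and verifying directly that the $(X,Y)$-coordinates of $P_\varphi^+-P_\varphi^-$ are rational functions of $s=u^6$ alone.

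Finally, writing $R\in F^{(1)}_{E_1,E_2}(k(s))$ for the pre-image, I would compute its height by Shioda's formula. Since $\Ino(E_1,E_2)$ is a $K3$ surface ($\chi=2$) whose only reducible fibers are the two $\II^*$ fibers at $s=0,\infty$, whose component group is trivial, and whose remaining fibers ($4\,\I_1$ or $2\,\II$) are irreducible, all local correction terms vanish and $\langle R,R\rangle=4+2(R\cdot O)$. A direct computation of the intersection with the zero section gives $R\cdot O=1$, hence $\langle R,R\rangle=6$ and in particular $R\neq0$. As a check, when $j_1\neq j_2$ the isomorphism $F^{(1)}_{E_1,E_2}(\Qbar(s))\cong\Hom_{\bar k}(E_1,E_2)\<2\>$ of Theorem~\ref{thm:MWrank} identifies $R$ with $\pm\varphi$, and $\langle\varphi,\varphi\rangle=2\deg\varphi=6$ since $\deg\varphi=3$; equivalently, the degree-$6$ base change $s=u^6$ multiplies heights by $6$, giving $\langle P_\varphi^+-P_\varphi^-,P_\varphi^+-P_\varphi^-\rangle=36$ on $F^{(6)}_{E_1,E_2}$.
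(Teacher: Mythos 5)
Your part (i) and your reduction of part (ii) to invariance under the deck transformation $u\mapsto-\omega u$ are essentially the paper's own steps, and you have correctly isolated the crux: the swap $D_{\varphi}^{+}\leftrightarrow D_{\varphi}^{-}$ acts as $-1$ on the relevant class in $\Pic^{0}(C_{u})$, and descent requires this to be cancelled by an inversion hidden in the $u$-dependence of \eqref{eq:change-var}. (Your $P_{\varphi}^{+}$ is the point of the class $[D_{\varphi}^{+}-3O]$, i.e.\ $2\Obar\ominus Q_{4}^{+}$, rather than the paper's $\Psi_{u}(Q_{4}^{+})$; this only flips the sign of $P_{\varphi}^{+}-P_{\varphi}^{-}$ and is harmless.) The gap is that the cancellation itself is exactly what you do not prove: you state it as an expectation and defer it to a brute-force verification (compute $Q_{4}^{\pm}$, push them through \eqref{eq:change-var}, subtract, and check the result lies in $k(u^{6})$). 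That computation is feasible --- it is essentially how the authors obtain the displayed coordinates of $P^{(1)}_{\varphi}$ --- but as written the decisive step of (ii) is asserted, not established. The paper proves it without computation by factoring $\sigma:u\mapsto-\omega u$ as $\sigma^{3}\circ\sigma^{4}$ and treating the factors by different arguments. For $\sigma^{3}:u\mapsto-u$ the inversion is immediate from parity: in \eqref{eq:change-var}, $X$ is even in $u$ and $Y$ is odd, so $\sigma^{3}(\Psi_{u}(Q))=-\Psi_{u}(\sigma^{3}(Q))$; since $\sigma^{3}$ fixes $O$ and $\mathcal{T}$ and swaps $p_{\varphi}^{+}\leftrightarrow p_{\varphi}^{-}$, it swaps $Q_{4}^{+}\leftrightarrow Q_{4}^{-}$, giving $\sigma^{3}(P_{\varphi}^{+})=-P_{\varphi}^{-}$ and hence invariance of the difference. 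For $\sigma^{4}:u\mapsto\omega u$ (where $O$ moves to $O_{\omega}$, so your translation issue is concentrated), no coordinates are needed at all: the function $q^{+}/q^{-}$ gives $[Q_{4}^{+}-Q_{4}^{-}]=[D_{\varphi}^{-}-D_{\varphi}^{+}]$, whose right-hand side is manifestly $\sigma^{4}$-invariant because $\sigma^{4}$ fixes $p_{\varphi}^{\pm}$. Note that your one-shot claim (``the conjugated identification differs from the original by inversion up to translation'') bundles two separate facts --- the parity sign and the triviality of the twist under $\sigma^{4}$ --- so it is strictly more than the parity of \eqref{eq:change-var} shows; supplying the second fact is precisely the role of the paper's divisor-class step.

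The height statement has the same status. Your reduction $\langle R,R\rangle=4+2(R\cdot O)$ via Shioda's formula is correct (both $\II^{*}$ fibers have trivial component group and the remaining fibers are irreducible), but $R\cdot O=1$ cannot be ``directly computed'' before you have coordinates for $R$, i.e.\ before completing the very computation you deferred; and the cross-check identifying $R$ with $\pm\varphi$ under Theorem~\ref{thm:MWrank} is circular as a proof, since knowing which element of $\Hom_{\bar k}(E_{1},E_{2})$ the section corresponds to is exactly what the correspondence of \cite{Shioda:correspondence} provides. The paper handles this by citation (``the calculation of the height is the same as in \cite{Shioda:correspondence}''), which yields height $2\deg\varphi=6$. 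So either adopt the $\sigma^{3}/\sigma^{4}$ decomposition together with that citation, or commit to carrying out the explicit computation; as it stands, both the descent and the height rest on steps that are announced but not performed.
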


\begin{proof}
Since $D_{\varphi}^{+}$ and $D_{\varphi}^{-}$ are both defined over~$k(u)$, $Q_{4}^{+}$ and $Q_{4}^{-}$ are $k(u)$-rational points on $C_{u}$. 
Let $\Psi_{u}:C_{u}\to F^{(6)}_{E_{1}, E_{2}}$ be the isomorphism over $k(u)$ defined by the formula \eqref{eq:change-var}, and $P_{\varphi}^{+}$ (resp.~$P_{\varphi}^{-}$) be the point in $F^{(6)}_{E_{1}, E_{2}}(k(u))$ given by $\Psi_{u}(Q_{4}^{+})$ (resp.~$\Psi_{u}(Q_{4}^{-}$)). 
Let $\sigma$ be the automorphism of ${\bar k}(u)$ defined by $u\mapsto -\omega u$.  It induces an automorphism of $C_{u}(\bar k(u))$ and that of $F_{E_{1},E_{2}}^{(6)}(\bar k(u))$.  We show that the $k(u)$-rational point $P_{\varphi}^{+}-P_{\varphi}^{-}\in F^{(6)}_{E_{1}, E_{2}}(k(u))$ is invariant under~$\sigma$.  This proves that $P_{\varphi}^{+}-P_{\varphi}^{-}$ belongs to the image of $F^{(1)}_{E_{1}, E_{2}}(k(s))$ under the map $s\mapsto u^{6}$.

First, consider the automorphism $\sigma^{3}:u\mapsto -u$.  The explicit conversion formula \eqref{eq:change-var} shows that $\sigma^{3}(\Psi_{u}(Q))=\Psi_{-u}(\sigma^{3}(Q))=-\Psi_{u}(\sigma^{3}(Q))$ for $Q\in C_{u}(k(u))$.  Since $\sigma^{3}$ exchanges $Q_{4}^{+}$ and $Q_{4}^{-}$ by definition, we have $\sigma^{3}(P_{\varphi}^{+})=-P_{\varphi}^{-}$, and $\sigma^{3}(P_{\varphi}^{-})=-P_{\varphi}^{+}$.  This implies that $\sigma^{3}$ leaves $P_{\varphi}^{+}-P_{\varphi}^{-}$ invariant.

Next, consider the automorphism $\sigma^{4}:u \mapsto \omega u$.  Since $\sigma^{4}$ leaves $p_{\varphi}^{+}(x_{1},z)$ (resp.~$p_{\varphi}^{-}(x_{1},z)$) invariant, $D_{\varphi}^{+}$ (resp.~$D_{\varphi}^{-}$) is invariant under~$\sigma^{4}$.  By construction, the divisors $D_{\varphi}^{+}+Q_{4}^{+}$ and $D_{\varphi}^{-}+Q_{4}^{-}$ are linearly equivalent by the function $q^{+}(x_{1},x_{2},z)/q^{-}(x_{1},x_{2},z)$.  This implies that the divisor class $[Q_{4}^{+}-Q_{4}^{-}]=[D_{\varphi}^{-}-D_{\varphi}^{+}]\in \Pic_{\bar k}^{0}(C_{u})$ is invariant under~$\sigma^{4}$.  
The Mordell-Weil group $F^{(6)}_{E_{1}, E_{2}}(\bar k(u))$ can be identified with $\Pic_{\bar k}^{0}(F^{(6)}_{E_{1}, E_{2}})$ by the map $P\mapsto [P-O]$, where $[\quad]$ stands for the divisor class.  Since $\Psi_{u}$ induces an isomorphism of groups $\Pic_{\bar k}^{0}(C_{u})\to\Pic_{\bar k}^{0}(F^{(6)}_{E_{1}, E_{2}})$, we can identify $F^{(6)}_{E_{1}, E_{2}}(\bar k(u))$ with $\Pic_{\bar k}^{0}(C_{u})$ via the map $P\mapsto [\Psi_{u}^{-1}(P)-O]$.  Since $P_{\varphi}^{+}-P_{\varphi}^{-}=[(\Psi_{u}^{-1}(P_{\varphi}^{+})-O)-(\Psi_{u}^{-1}(P_{\varphi}^{-})-O)]=[Q_{4}^{+}-Q_{4}^{-}]$, we see that $P_{\varphi}^{+}-P_{\varphi}^{-}$ is invariant under~$\sigma^{4}$. 

We thus conclude that $P_{\varphi}^{+}-P_{\varphi}^{-}$ is invariant under the automorphism $\sigma=\sigma^{3}\circ\sigma^{4}$.

The calculation of the height is the same as in \cite{Shioda:correspondence}.
\end{proof}

We denote by $P^{(1)}_{\varphi}$ the section $P_{\varphi}^{+}-P_{\varphi}^{-}$ in $F^{(1)}_{E_{1}, E_{2}}(k(s))$. Carrying out calculations according to the above recipe, we obtain the coordinates of $P^{(1)}_{\varphi}$ as follows:
\begin{multline*}
P^{(1)}_{\varphi}=
\left(-\frac{3 S^{3} + c_{2} S^{2} + c_{1} S + c_{0}}
{16aa'\bigl(S-9bb'\bigr)},\right.
\\
\left.-\frac{s\bigl(bs+9b'\bigr)\bigl(9S^{4} + d_{3}S^{3} + d_{2} S^{2} + d_{1} S + d_{0}\bigr)}
{288a^{3}\bigl(bs-9b'\bigr)^{3}}\right),
\end{multline*}
where \allowdisplaybreaks
\begin{align*}
S &=\frac{9}{2}\Bigl(\frac{b^2s}{9}+\frac{9b'^2}{s}\Bigr), \\
c_{2} &= 8aa'+81bb', \quad
c_{1} = \frac{16}{9}a^{2}a'^{2} - 144aa'bb'+ 729 b^2b'^{2}, \\
c_{0} &= bb'(80a^{2}a'^{2} - 1944aa'bb'+ 2187b^2b'^{2}),
\\
d_{3} &= 36\bigl(aa'+9bb'\bigr), \quad
d_{2} = 2(16a^2a'^{2}-162aa'bb'+2187b^2b'^{2}), \\
d_{1} &= -108bb'\bigl(8a^2a'^{2}+135aa'bb'-243b^2b'^{2}\bigr), \\
d_{0} &= -3bb'\bigl(128a^3a'^{3}-6912a^{2}a'^{2}bb'+26244aa'b^2b'^{2}-19683b^3b'^{3}\bigr). 
\end{align*}

\section{Rational sections of $F_{E_{1},E_{2}}^{(2)}$}\label{sec:F2}

Next, we consider the Mordell-Weil lattice $F_{E_{1},E_{2}}^{(2)}$.  We use the same notation as the previous section.

Let $\alpha_{1},\alpha_{2},\alpha_{3}$ be the three roots of $x_{1}^{3}+a(x_{1}-b)^2=0$.  For $i=1,2,3$, define $\beta_{i}=\varphi_{x}(\alpha_{i})$.  Since $\varphi$ is an isogeny of odd degree, $(\beta_{i},0)$ are the $2$-torsion points of $E_{2}$.  Thus, we have $k(E_{1}[2],E_{2}[2])=k(\alpha_{1},\alpha_{2},\alpha_{3})$, which we denote by~$k_{2}$ for short.

Let $R_{ij}$ be the point $(\alpha_{i}:\beta_{j}:1)$ in $C_{u}$.  The quotient of $C_{u}$ by the action $u\mapsto \omega u$ is nothing but the model of Kummer surface \eqref{eq:Cubic}. 

\def\ShiodaTh{\cite[Theorem~1.2]{Shioda:correspondence}}
\begin{theorem}[cf. \ShiodaTh ]
\label{thm:F2-Shioda}
Suppose $j(E_{1})\neq j(E_{2})$, and let $h$ be the rank of $\Hom_{\bar k}(E_{1},E_{2})$.   Then, the Mordell-Weil lattice $F^{(2)}_{E_{1}, E_{2}}(\bar k(t))$ contains a sublattice of index $2^{h}$ naturally isomorphic to 
\[
\Hom_{\bar k}(E_{1},E_{2})\<4\>\oplus A_{2}^{*}\<2\>^{\oplus 2},
\]
where $A_{2}^{*}$ denotes the dual lattice of the root lattice $A_{2}$, and $\<n\>$ denotes the lattice with the pairing multiplied by~$n$. 
In particular, the determinant of the height matrix of $F^{(2)}_{E_{1}, E_{2}}(\bar k(t))$ equals $2^{4}/3^{2}$ times the determinant of $\Hom_{\bar k}(E_{1},E_{2})$.
\end{theorem}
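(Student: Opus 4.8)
The plan is to realize $F^{(2)}_{E_{1},E_{2}}$ as a quadratic base change of $F^{(1)}_{E_{1},E_{2}}$ and to split its Mordell--Weil lattice into a part pulled back from $F^{(1)}_{E_{1},E_{2}}$ and a genuinely new part concentrated at the two degenerate fibers. First I would note that substituting $s\mapsto s^{2}$ in \eqref{eq:F1} yields exactly \eqref{eq:Fn} with $n=2$, so that $F^{(2)}_{E_{1},E_{2}}$ is the pullback of $F^{(1)}_{E_{1},E_{2}}$ under the double cover $t\mapsto t^{2}=s$ of the base, ramified precisely over the two $\II^{*}$ fibers at $s=0,\infty$ (this matches the identification of $F^{(2)}_{E_{1},E_{2}}$ with Inose's pencil \eqref{eq:Cubic}, where $u^{6}=t^{2}$). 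Running Tate's algorithm on the doubled valuations shows that each $\II^{*}$ ($=E_{8}$) fiber becomes a fiber of type $\IV^{*}$ ($=E_{6}$), while in the generic case ($j_{1}\neq j_{2}$, $j_{1}j_{2}\neq0$) the four $\I_{1}$ fibers split into eight $\I_{1}$ fibers, contributing no roots. Hence the trivial lattice of $F^{(2)}_{E_{1},E_{2}}$ is $U\oplus E_{6}\oplus E_{6}$, and, comparing Picard numbers through Theorem~\ref{thm:MWrank} and the Shioda--Tate formula, the Mordell--Weil rank is $h+4$.

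Second, I would exhibit the two summands explicitly. Pulling sections back along the base change $\pi\colon F^{(2)}_{E_{1},E_{2}}\to F^{(1)}_{E_{1},E_{2}}$ embeds $\Hom_{\bar k}(E_{1},E_{2})\langle2\rangle$ into the Mordell--Weil lattice; since the height pairing scales by the degree $2$ of the cover, the image is $\Hom_{\bar k}(E_{1},E_{2})\langle4\rangle$, the invariant part under the covering involution $\iota\colon t\mapsto -t$. The anti-invariant part reflects the classical orthogonal decomposition $E_{6}\perp A_{2}\subset E_{8}$ of index $3$: the $A_{2}$ that is the orthogonal complement of the surviving $E_{6}$ inside each old $E_{8}$ is no longer a union of fiber components after the base change, so these classes descend to Mordell--Weil sections. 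Concretely I would produce them from the nine points $R_{ij}=(\alpha_{i}:\beta_{j}:1)$ on $C_{u}$, that is, from $E_{1}[2]$ and $E_{2}[2]$: the three $2$-torsion points of $E_{1}$ give one triple of sections and those of $E_{2}$ the other. Using Shioda's height formula with the local correction terms $4/3$ and $2/3$ at non-identity components of the $\IV^{*}$ fibers, I would verify that each triple sums to zero and has Gram matrix with diagonal $4/3$ and off-diagonal $-2/3$, i.e.\ that it generates a copy of $A_{2}^{*}\langle2\rangle\cong A_{2}\langle2/3\rangle$. The two copies are orthogonal to each other and to the $\Hom$-part (the latter being automatic from the eigenspace decomposition under $\iota$), giving the orthogonal sum $\Hom_{\bar k}(E_{1},E_{2})\langle4\rangle\oplus A_{2}^{*}\langle2\rangle^{\oplus2}$ of rank $h+4$.

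Third, I would pin down the index. The determinant of the constructed sublattice is
\[
4^{h}\det\Hom_{\bar k}(E_{1},E_{2})\cdot\Bigl(\frac{4}{3}\Bigr)^{2}
=\frac{16}{9}\,4^{h}\det\Hom_{\bar k}(E_{1},E_{2}).
\]
Independently I would compute $\det$ of the full Mordell--Weil lattice, either from $\det\operatorname{NS}(F^{(2)}_{E_{1},E_{2}})$ via the Shioda--Tate discriminant relation with trivial lattice $U\oplus E_{6}\oplus E_{6}$ (of discriminant $9$), using the identification of $F^{(2)}_{E_{1},E_{2}}$ with Inose's pencil on $\Km(E_{1}\times E_{2})$ to evaluate $\det\operatorname{NS}$, or directly by displaying the glue. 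Comparing the two determinants should give the ratio $4^{h}=(2^{h})^{2}$, whence the index equals $2^{h}$ and the stated value $2^{4}/3^{2}$ of the height determinant follows at once.

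The step I expect to be the main obstacle is the last one: showing that the index is \emph{exactly} $2^{h}$ rather than merely a $2$-power. This amounts to controlling the image of the full Mordell--Weil lattice in the component groups $(\mathbb{Z}/3)^{2}$ of the two $\IV^{*}$ fibers together with the gluing between the $\Hom$-part and the $A_{2}^{*}$-part; equivalently, one must exhibit, for each generator $\varphi$ of $\Hom_{\bar k}(E_{1},E_{2})$, an honest section of $F^{(2)}_{E_{1},E_{2}}$ equal to one half of $\pi^{*}P^{(1)}_{\varphi}$ modulo the $A_{2}^{*}$- and torsion-classes, which is precisely where the explicit sections of \S\ref{sec:F2} (in the spirit of \cite{Kumar-Kuwata}) do the work. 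Getting the local contributions at the $\IV^{*}$ fibers right, so that the new lattice is $A_{2}^{*}\langle2\rangle$ and not $A_{2}\langle\,\cdot\,\rangle$, and confirming that no $3$-torsion section is introduced by the two $\IV^{*}$ fibers (so the discriminant relation applies with trivial torsion) are the remaining subtle points; the determinant bookkeeping itself is routine.
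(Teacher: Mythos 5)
Your proposal is sound, but it takes a genuinely different route from the paper: the paper's entire proof of Theorem~\ref{thm:F2-Shioda} is two sentences, quoting the sublattice-and-index statement verbatim from \cite[Theorem~1.2]{Shioda:correspondence} and then deducing the determinant assertion in one line from $\det L'=\det L\times[L:L']^{2}$, since $\det\bigl(\Hom_{\bar k}(E_{1},E_{2})\langle 4\rangle\oplus A_{2}^{*}\langle 2\rangle^{\oplus 2}\bigr)=4^{h}\cdot(4/3)^{2}\cdot\det\Hom_{\bar k}(E_{1},E_{2})$ and the index is $2^{h}$. What you propose is, in effect, a reconstruction of Shioda's cited theorem itself (plus the Kumar--Kuwata computation of the $R_{ij}$-part), and with the opposite logical direction: the paper gets the determinant from the cited index, whereas you get the index from an independently computed determinant of the full lattice. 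Your plan does close, and its required inputs are all true: torsion-freeness is immediate from the height formula (any section has height at least $4-2\cdot\tfrac{4}{3}>0$); the Gram matrices of the $R_{ij}$ are as in \cite[Prop.~3.3]{Kumar-Kuwata}; and for the full determinant, the Kummer identification gives $\operatorname{NS}(E_{1}\times E_{2})\cong U\oplus\Hom_{\bar k}(E_{1},E_{2})\langle -2\rangle$ of determinant $\pm 2^{h}\det\Hom_{\bar k}(E_{1},E_{2})$, so that $T_{\Km(E_{1}\times E_{2})}\cong T_{E_{1}\times E_{2}}\langle 2\rangle$ has determinant (up to sign) $2^{4-h}\cdot 2^{h}\det\Hom_{\bar k}(E_{1},E_{2})=2^{4}\det\Hom_{\bar k}(E_{1},E_{2})$, and Shioda--Tate with trivial lattice $U\oplus E_{6}^{\oplus 2}$ forces $\det F^{(2)}_{E_{1},E_{2}}(\bar k(t))=2^{4}\det\Hom_{\bar k}(E_{1},E_{2})/3^{2}$ and hence index exactly $2^{h}$. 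Two cautions: the factor $2^{h}$ in $\det\operatorname{NS}(E_{1}\times E_{2})$ (the graphs span $\Hom\langle -2\rangle$, not $\Hom\langle -1\rangle$) is precisely what makes the final answer independent of $h$, so this normalization must be nailed down or the index will not come out right; and your fallback of exhibiting a section equal to half of $\pi^{*}P_{\varphi}^{(1)}$ modulo the $A_{2}^{*}$-part only works when such sections can be written explicitly (the paper does this only for $3$-isogenies in \S\ref{sec:F2}), so for arbitrary $h$ it is the lattice-theoretic computation that must carry the argument. What each approach buys: the paper's is short and rigorous by deferring to the literature; yours is self-contained and explains the geometric origin of both summands and of the index, at the cost of several nontrivial facts about $\operatorname{NS}$ and transcendental lattices of Kummer surfaces.
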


\begin{proof}
Except for the last statement, it is Theorem~1.2 of \cite{Shioda:correspondence}.  The last statement follows from the fact that if $L'\subset L$ is a sublattice of finite index in $L$, then we have $\det L'=\det L \times [L:L']^{2}$. 
\end{proof}

In case $\Hom_{\bar k}(E_{1},E_{2})=0$, it is known (cf. \cite[Prop.~3.3]{Kumar-Kuwata}) that, by taking $R_{11}$ as the origin, $F_{E_{1},E_{2}}^{(2)}(\bar k(t))$ is generated by $R_{22},R_{33},R_{23},R_{32}$, and is isomorphic to the lattice $A_{2}^{*}\<2\>$.

Recall that the origin $O$ in $C_{u}$ is $(x_{1}:x_{2}:z)=(1:u^{2}:0)$.  Let $O_{\omega}$ and $O_{\omega^{2}}$ be the points in $C_{u}$ defined by $\bigl(1:(\omega u)^{2}:0\bigr)$ and $\bigl(1:(\omega^{2} u)^{2}:0\bigr)$ respectively.  Clearly, the action $u\mapsto \omega u$ induces a cyclic permutation $O\mapsto O_{\omega}\mapsto O_{\omega^{2}}$.  Thus, the divisor $O+O_{\omega}+O_{\omega^{2}}$ is invariant under this action. Let $q(x_{1},x_{2},z)$ be the quadratic form in \eqref{eq:quadratic} and $l(x_{1},x_{2},z)$ the linear form  in \eqref{eq:tangent}.  Consider the function
\(
f=\frac{q(x_{1},x_{2},z)}{z\,l(x_{1},x_{2},z)}.
\)
The divisor of this function is 
\[
\div(f)=D_{\varphi}^{+}+Q_{4}^{+}+2O - (O+O_{\omega}+O_{\omega^{2}}+2O+\Obar).
\]
Thus, we have
\[
Q_{4}^{+} - \Obar \sim (O+O_{\omega}+O_{\omega^{2}}) - D_{\varphi}^{+},
\]
where $\Obar$ is the third point of intersection between $C_{u}$ and the tangent line at~$O$.
Let $P_{\Obar}$ be the section in $F^{(6)}_{E_{1}, E_{2}}(k(u))$ corresponding to $\Obar\in C_{u}(k(u))$.  Then, since $(O+O_{\omega}+O_{\omega^{2}}) - D_{\varphi}^{+}$ is invariant under the action $u\mapsto \omega u$, the section $P_{\varphi}^{+}-P_{\Obar}$ is also invariant under this action. This implies that $P_{\varphi}^{+}-P_{\Obar}$ is a $k(t)$-rational section where $t=u^{3}$.

\begin{theorem}\label{thm:F2}
Suppose $E_{1}$ and $E_{2}$ are isogenous by an isogeny $\varphi$ of degree~$3$ over~$k$, and $E_{1}$ and $E_{2}$ do not have complex multiplication.  Let $P_{\varphi}^{(2)}=P_{\varphi}^{+}-P_{\Obar}$ be the section of $F_{E_{1},E_{2}}^{(2)}$ defined as above.  Then, $P_{\varphi}^{(2)}$, $R_{22}$, $R_{33}$, $R_{23},R_{32}$ are linearly independent and generate the Mordell-Weil group $F_{E_{1},E_{2}}^{(2)}(\bar k (t))$.
\end{theorem}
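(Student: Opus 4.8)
The plan is to prove the theorem by an index (determinant) argument: I will show that the height-pairing Gram matrix of the five sections $P_{\varphi}^{(2)},R_{22},R_{33},R_{23},R_{32}$ has determinant equal to $16/3$, which is exactly the determinant of the full Mordell-Weil lattice $F^{(2)}_{E_{1},E_{2}}(\bar k(t))$. First I would record that, since $E_{1}$ and $E_{2}$ are isogenous and have no complex multiplication, $\Hom_{\bar k}(E_{1},E_{2})=\mathbf{Z}\varphi$ is of rank $h=1$ with $\det\Hom_{\bar k}(E_{1},E_{2})=(\varphi,\varphi)=\deg\varphi=3$. Hence by Theorem~\ref{thm:F2-Shioda} the lattice $F^{(2)}_{E_{1},E_{2}}(\bar k(t))$ has rank $h+4=5$ and determinant $2^{4}/3^{2}\cdot 3=16/3$. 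Once the Gram determinant of the five sections is shown to be $16/3$, and in particular nonzero, linear independence is automatic, the five sections span a finite-index sublattice $N$, and the relation $\det N=\det\bigl(F^{(2)}_{E_{1},E_{2}}(\bar k(t))\bigr)\cdot[F^{(2)}_{E_{1},E_{2}}(\bar k(t)):N]^{2}$ forces the index to be $1$; thus they generate.

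Next I would isolate the contribution of the four sections $R_{ij}$. By the result of Kumar--Kuwata quoted above, $R_{22},R_{33},R_{23},R_{32}$, with $R_{11}$ as origin, generate a copy of $A_{2}^{*}\<2\>^{\oplus 2}$, whose determinant is $(4/3)^{2}=16/9$. The heights and mutual pairings of the $R_{ij}$ are given by a universal formula in terms of the fibre types and of which components each section meets, and that same local computation applies verbatim in our situation, so the $4\times 4$ block of the Gram matrix indexed by the $R_{ij}$ is that of $A_{2}^{*}\<2\>^{\oplus 2}$. Writing the $5\times 5$ Gram matrix in block form and expanding by the Schur complement, the target identity $\det=16/3=3\cdot(16/9)$ becomes equivalent to the single statement that the reduced height of $P_{\varphi}^{(2)}$ orthogonal to the span of the $R_{ij}$ equals $3$; since the decomposition of Theorem~\ref{thm:F2-Shioda} is orthogonal, this orthogonal complement is exactly the $\Hom_{\bar k}(E_{1},E_{2})\<4\>$ direction.

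To compute this reduced height I would apply Shioda's height formula on the $K3$ surface $F^{(2)}_{E_{1},E_{2}}$, for which $\chi(\mathcal{O})=2$, so that $\langle P,P\rangle=4+2(P\cdot O)-\sum_{v}\mathrm{contr}_{v}(P)$ and $\langle P,Q\rangle=2+(P\cdot O)+(Q\cdot O)-(P\cdot Q)-\sum_{v}\mathrm{contr}_{v}(P,Q)$. The base change $s=t^{2}$ turns the two $\II^{*}$ fibres of $F^{(1)}_{E_{1},E_{2}}$ into two $\IV^{*}$ fibres of $F^{(2)}_{E_{1},E_{2}}$ at $t=0$ and $t=\infty$, which are the only reducible fibres and hence the only ones contributing local terms; for a $\IV^{*}$ fibre the contribution is $4/3$ at either non-identity simple component and $2/3$ for the cross term between the two distinct such components. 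The computation therefore reduces to determining, for $P_{\varphi}^{(2)}$ and for each $R_{ij}$, the intersection number with the zero section, the component of each $\IV^{*}$ fibre that the section meets, and the mutual intersection numbers. A convenient cross-check on the outcome is provided by the degree-two base-change map $F^{(1)}_{E_{1},E_{2}}(\bar k(s))\to F^{(2)}_{E_{1},E_{2}}(\bar k(t))$, under which the section $P^{(1)}_{\varphi}$ of height $6$ pulls back to a section of height $12$ generating $\Hom_{\bar k}(E_{1},E_{2})\<4\>$; comparing it with $P_{\varphi}^{(2)}$ should pin the $\Hom$-component of $P_{\varphi}^{(2)}$ to $\tfrac12\varphi$, giving reduced height $12\cdot(\tfrac12)^{2}=3$ as required.

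The main obstacle is precisely this local analysis at the two $\IV^{*}$ fibres for $P_{\varphi}^{(2)}$: because the Weierstrass coordinates of $P_{\varphi}^{(2)}$ are complicated, determining the exact components met, and hence the correction terms $\mathrm{contr}_{v}$, as well as the cross intersection numbers with the $R_{ij}$, requires a careful reduction of the coordinates at $t=0$ and $t=\infty$, best carried out with the aid of \verb|Maple|. Once the component assignments and intersection numbers are in hand, assembling the $5\times 5$ Gram matrix and evaluating its determinant to $16/3$ is routine, and the index argument of the first paragraph then yields both the linear independence and the generation claimed in the theorem.
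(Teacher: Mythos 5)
Your proposal is correct and takes essentially the same route as the paper: the paper's own proof also computes the $5\times 5$ height matrix of $P_{\varphi}^{(2)}, R_{22}, R_{33}, R_{23}, R_{32}$ (finding height $4$ for $P_{\varphi}^{(2)}$, pairings $-1$ with $R_{23}, R_{32}$, and determinant $2^{4}/3$) and then invokes the determinant statement of Theorem~\ref{thm:F2-Shioda} together with the relation $\det L' = \det L \cdot [L:L']^{2}$ to conclude linear independence and generation. Your Schur-complement packaging and the pullback cross-check (reduced height $3$ against the $R_{ij}$-block of determinant $16/9$) are consistent with the paper's matrix and amount to the same computation, which the paper carries out directly from the explicit coordinates of the sections.
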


\begin{proof}
Since $E_{1}\sim E_{2}$, but they do not have complex multiplication, we have $\rank\Hom_{\bar k}(E_{1},E_{2})=1$, and we are in case (ii) of Lemma~\ref{lem:3-isog}.
By straightforward calculations, we have
\[
P_{\varphi}^{(2)}=
\left(T^{2} + 4aT + \frac{4}{3}(a^2-3bb'),
\Bigl(bt+\frac{b'}{t}\Bigl)
\bigl(T^2 + 6aT+4(2a^2+bb')\bigr)\right),
\]
where $b'=4a+27b$ and $T=bt-b'/t$. It is easy to show that height of $P_{\varphi}^{(2)}$ equals~$4$.  Since the coordinates of $R_{ij}$ involve the roots of the cubic equation $x_{1}^{3}+a(x_{1}-b)^2=0$, we do not write down the explicit coordinates here, but calculations are straightforward. (We will show them in the numerical examples.) The height matrix with respect to $P_{\varphi}^{(2)}$, $R_{22}$, $R_{33}$, $R_{23},R_{32}$ is given by
\[
\frac{1}{3}\left(\begin{array}{*5r}
12&\phantom{-}0&\phantom{-}0&-3&-3\\ 
0&4&2&0&0\\
0&2&4&0&0\\
-3&0&0&4&2\\
-3&0&0&2&4
\end{array}\right).
\]
Its determinant is $2^4/3$, which equals $2^{4}/3^{2}$ times $\det\Hom_{\bar k}(E_{1},E_{2})=3$.  Thus, $P_{\varphi}^{(2)}$, $R_{22}$, $R_{33}$, $R_{23},R_{32}$ are the generators of $F_{E_{1},E_{2}}^{(2)}(\bar k (t))$.
\end{proof}

\begin{remark}
The height paring of given two rational points is easily computed by the height formula in \cite{Kuwata:can-height}.
\end{remark}

\section{Singular $K3$ surfaces}\label{singular}

A complex $K3$ surface whose Picard number equals the maximum possible number $20$ is called a singular $K3$ surface. Shioda and Inose showed that a complex singular $K3$ surface is isomorphic to an Inose surface $\Ino(E_{1}, E_{2})$ for some elliptic curves $E_{1}$ and $E_{2}$ that have complex multiplication and are isogenous to each other.

\begin{theorem}[\cite{Shioda-Inose}]
There is a one-to-one correspondence between the set of isomorphism classes of complex singular $K3$ surfaces and the set of equivalence classes of positive-definite even integral lattices of rank $2$ with respect to $SL_2(\mathbf{Z})$:
  \[
    \begin{aligned}
      & \Set{ \text{singular $K3$ surfaces over $\Qbar$} } / \text{~isom.}\\
      & \overset{1:1}{\leftrightarrow} \Set{ \left(
        \begin{array}{cc}
          2a & b\\
          b & 2c
        \end{array}
      \right) \ | \ a, b, c \in \mathbf{Z}, \, a, c >0, \ b^2-4ac <0 } 
      \Big/ SL_2(\mathbf{Z}),\\
    \end{aligned}
  \]
which associates a singular $K3$ surface $X$ with its transcendental lattice $T_X$.
\end{theorem}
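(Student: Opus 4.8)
The plan is to send a singular $K3$ surface $X$ to its transcendental lattice $T_{X}$ equipped with the Hodge structure it inherits from $H^{2}(X,\mathbf{Z})$, and to establish the bijection using the Torelli theorem together with the surjectivity of the period map for $K3$ surfaces. First I would verify that the target is correct. Since $X$ is singular we have $\rho(X)=20$, so $\mathrm{NS}(X)$ has rank $20$ and signature $(1,19)$ inside the even unimodular $K3$ lattice $H^{2}(X,\mathbf{Z})\cong U^{\oplus 3}\oplus E_{8}(-1)^{\oplus 2}$ of signature $(3,19)$. Hence $T_{X}=\mathrm{NS}(X)^{\perp}$ has rank $2$ and signature $(2,0)$, and it is even because $H^{2}(X,\mathbf{Z})$ is; this yields a Gram matrix of the asserted shape with $b^{2}-4ac<0$. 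The refinement from $GL_{2}(\mathbf{Z})$ to $SL_{2}(\mathbf{Z})$ comes from an orientation: the real and imaginary parts of a generator $\omega$ of $H^{2,0}(X)\subset T_{X}\otimes\mathbf{C}$ orient $T_{X}\otimes\mathbf{R}$, and this orientation is preserved exactly by $SL_{2}(\mathbf{Z})$.

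The observation that makes the correspondence rigid is that in rank $2$ the period is forced by the lattice. Writing the form as a symmetric matrix $Q$ and a period as $\omega=(z_{1}:z_{2})$, the Hodge--Riemann relations $\omega^{t}Q\,\omega=0$ and $\omega^{t}Q\,\bar\omega>0$ determine the line $\mathbf{C}\omega$ up to complex conjugation; together with the orientation, the lattice therefore determines its weight-$2$ Hodge structure. Consequently every orientation-preserving isometry $T_{X_{1}}\to T_{X_{2}}$ is automatically a Hodge isometry.

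For injectivity, suppose $T_{X_{1}}$ and $T_{X_{2}}$ are $SL_{2}(\mathbf{Z})$-equivalent. By the previous paragraph the equivalence is an orientation-preserving Hodge isometry $T_{X_{1}}\to T_{X_{2}}$. Using Nikulin's theory of primitive embeddings of even lattices into the unimodular $K3$ lattice, I would extend this to a Hodge isometry $H^{2}(X_{1},\mathbf{Z})\to H^{2}(X_{2},\mathbf{Z})$: one extends over the algebraic part $\mathrm{NS}$, which is of type $(1,1)$, and glues using the induced isomorphism of discriminant forms. After composing with a suitable product of reflections in $(-2)$-classes, the isometry can be made to carry the ample cone of $X_{1}$ into that of $X_{2}$, and then the Torelli theorem gives $X_{1}\cong X_{2}$. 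For surjectivity, given a positive-definite even lattice $T$ of rank $2$ with $b^{2}-4ac<0$, Nikulin's existence criterion provides a primitive embedding $T\hookrightarrow U^{\oplus 3}\oplus E_{8}(-1)^{\oplus 2}$ whose orthogonal complement has signature $(1,19)$. Solving $\omega^{t}Q\,\omega=0$ in $T\otimes\mathbf{C}$ singles out a point of the period domain with transcendental lattice exactly $T$, and the surjectivity of the period map furnishes a $K3$ surface $X$ with $T_{X}\cong T$ and $\rho(X)=20$, hence singular. Alternatively one may invoke the explicit Shioda--Inose model, in which the discriminant $b^{2}-4ac$ selects an imaginary quadratic order, $E_{1}$ and $E_{2}$ are CM elliptic curves for that order, and $\Ino(E_{1},E_{2})$ has transcendental lattice $T$.

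I expect the main obstacle to be the extension step in the injectivity argument: lifting the rank-$2$ Hodge isometry of transcendental lattices to a Hodge isometry of the entire $K3$ lattice that is compatible with the ample cones, so that Torelli applies. This is precisely where Nikulin's uniqueness results for primitive embeddings and the transitivity of the Weyl group on chambers are needed; the underlying lattice computations are routine but must be assembled carefully.
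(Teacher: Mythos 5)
Your outline is sound, but note first that the paper itself offers no proof of this statement: it is quoted from Shioda--Inose, and what the paper adds immediately afterwards is the explicit inverse of the correspondence, namely that for a given matrix one takes the CM elliptic curves $E_{1}=\mathbf{C}/(\mathbf{Z}\oplus\tau_{1}\mathbf{Z})$ and $E_{2}=\mathbf{C}/(\mathbf{Z}\oplus\tau_{2}\mathbf{Z})$ with $\tau_{1}=(-b+\sqrt{b^{2}-4ac})/2a$, $\tau_{2}=(b+\sqrt{b^{2}-4ac})/2$, and then $\Ino(E_{1},E_{2})$ has the prescribed transcendental lattice. So the relevant comparison is with the original Shioda--Inose argument. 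The injectivity half of your proposal coincides with it: an oriented isometry of rank-$2$ transcendental lattices is automatically a Hodge isometry (your rigidity observation), and one extends it to the full $K3$ lattice and applies the Piatetski-Shapiro--Shafarevich Torelli theorem. The surjectivity half is where the routes genuinely diverge: surjectivity of the period map was not available in 1977, and Shioda--Inose prove surjectivity \emph{constructively}, using Shioda--Mitani's theorem that every positive-definite even binary form is the transcendental lattice of an abelian surface $E_{1}\times E_{2}$ with the $\tau_{i}$ above, transported to the $K3$ side through the Kummer sandwich. Your abstract route (Nikulin's embedding theorems plus period-map surjectivity) is correct and shorter, but it produces the surface non-constructively; the constructive route yields the explicit CM model, which is exactly what the rest of the paper (\S\ref{singular}) computes with. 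One point in your injectivity step deserves more care than your sketch suggests: reflections in $(-2)$-classes act trivially on the transcendental lattice but only move chambers within one component of the positive cone, so if the extended isometry exchanges the two components you must correct this without reversing the orientation of $T_{X}$ (composing with $-\mathrm{id}$ is not allowed); this can be arranged, but it does not follow from Weyl-group transitivity alone.
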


In fact, a singular $K3$ surface corresponding to the lattice $Q=\left(
  \begin{array}{cc}
    2a & b\\
    b & 2c
  \end{array}
\right)$ is constructed as follows. Let $E_{1}$ and $E_{2}$ be the complex elliptic curves $\mathbf{C}/ \mathbf{Z} \oplus \tau_1 \mathbf{Z}$ and $\mathbf{C}/ \mathbf{Z} \oplus \tau_2 \mathbf{Z}$, respectively, where
\[
  \tau_1 = \frac{-b+\sqrt{b^2-4ac}}{2a}, \quad 
  \tau_2 = \frac{b+\sqrt{b^2-4ac}}{2}.
\]
Then, the Inose surface $\Ino(E_{1}, E_{2})$ is a singular $K3$ surface corresponding to $Q$, which is often denoted by $X_{[a,b,c]}$.

In the following, we study in detail the Mordell-Weil group of $F^{(1)}$ and $F^{(2)}$ for $X_{[3,3,3]}$, $X_{[3,2,3]}$, and $X_{[3,0,3]}$

\subsection{The singular $K3$ surface $X_{[3,3,3]}$}\label{X333}

The transcendental lattice of the singular $K3$ surface $X_{[3,3,3]}$ is given by the matrix
\[
  \left(
    \begin{array}{cc}
      6 & 3\\
      3 & 6
    \end{array}
  \right).
\]
Then, elliptic curves $E_{1}$ and $E_{2}$ are  given by 
\[
   E_{1}=\mathbf{C}/\mathbf{Z} \oplus \mathbf{Z}\omega ,
   \quad
   E_{2}=\mathbf{C}/\mathbf{Z} \oplus \mathbf{Z}(-3\omega^{2}), 
\]
where $\omega = \frac{-1+\sqrt{-3}}{2}$. Their $j$-invariants are given by
\[
  j(E_{1})=0, \quad j(E_{2})=-1288000,
\]
and we take the following Weierstrass forms of them:
\[\setlength{\arraycolsep}{2pt}
\begin{array}{lrcl}
    E_{1} : &y_1^2 & = &x_1^3+6(x_{1}+1)^{2}, \\
    E_{2} : &y_2^2 & = &x_2^3-2(3x_{2}+1)^2.
\end{array}
\]
This is nothing but the case $a=6$, $b=-1$ in \eqref{eq:E1andE2}. Then, $F_{E_{1},E_{2}}^{(1)}$ is given by
\[
F_{E_{1},E_{2}}^{(1)}: Y^2=X^3-27\Bigl(s-506+\frac{9}{s}\Bigr),
\]
after rescaling $X$ and $Y$.  In this case, $F_{E_{1},E_{2}}^{(1)}$ itself has complex multiplication by~$\omega$.

There are two $3$-isogenies from $E_{1}$ to $E_{2}$. One is the $3$-isogeny $\varphi$ in \S\ref{3iso}. We denote it by $\varphi_1$. The other, denoted by $\varphi_2$, is a composite of $\varphi_1$ and multiplication by $\omega$ on $E_{2}$. They are given by
\[
  \varphi_1(x_1, y_1) = (\varphi_x(x_1), \varphi_y(x_1) y_1), \quad
  \varphi_2(x_1,y_1)=  (\omega \varphi_x(x_1), \varphi_y(x_1) y_1),
\]
where
\[
    \varphi_x(x_1)=\frac{x_1^3+8x_1^2+24x_1+24}{x_1^2}, \quad
    \varphi_y(x_1)=-\frac{x_1^3-24x_1-48}{x_1^3} .
\]
These isogenies form a basis of the lattice $\Hom_{\Qbar}(E_{1},E_{2})$ whose Gram matrix is given by \(\dfrac{1}{2}\left(\begin{array}{rr} 6 & 3\\ 3 & 6 \end{array} \right)\). 
The isogeny $\varphi_1$ yields a $\mathbf{Q}$-rational section $P^{(1)}_{\varphi}=P_{\varphi}^{+}-P_{\varphi}^{-}$ of $F^{(1)}_{E_{1},E_{2}}$ (see \S5).  If we let $S=\frac{1}{2}\bigl(\frac{s}{3}+\frac{3}{s}\bigr)$, its coordinates are given by
\begin{multline*}
P^{(1)}_{\varphi} = 
\left(\frac{S^3-93S^2+963S+4129}{64(S-1)},\right.
\\
\left.
\frac{3s(s+3)(S^4-140S^3+4758S^2-13100S+258481)}{256(s-3)^3}\right).
\end{multline*}
The height matrix with respect to sections $P_{\varphi}^{(1)}$ and $[-\omega]P_{\varphi}^{(1)}=\bigl(\omega X,-Y\bigr)$ is given by \(\left( \begin{array}{rr} 6 & 3\\ 3 & 6 \end{array} \right)\).  Thus, by Theorem~\ref{thm:MWrank}, these sections form a basis of the Mordell-Weil group $F_{E_{1},E_{2}}^{(1)}(\Qbar(s))=F_{E_{1},E_{2}}^{(1)}(\mathbf{Q}(\omega)(s))$. 

Next, we consider $F_{E_{1},E_{2}}^{(2)}(\Qbar(t))$.  The explicit formula for the sections $P_{\varphi}^{(2)}$, $R_{22}$, $R_{33}$, $R_{23},R_{32}$ described in Theorem~\ref{thm:F2} are as follows:
\begin{gather*}
P_{\varphi}^{(2)}
=\left(\frac{1}{4}T_{-}^2-6T_{-}+15, 
\frac{1}{8}T_{+}
\bigl(T_{-}^2-36T_{-}+300\bigr)\right)
\\
R_{22}=\Bigl(-15\cdot2^\frac{2}{3}\omega, 
-3\sqrt{-3}\,T_{-}\Bigr), 
\quad R_{33}=[-\omega]R_{22},
\\
R_{23}=\Bigl(-24\omega, -3\sqrt{-3}\,T_{+}\Bigr),
\quad R_{32}=[-\omega]R_{23},
\end{gather*}
where $T_{-}=t-3/t$, and $T_{+}=t+3/t$.

The height matrix with respect to $P_{\varphi}^{(2)}$, $[-\omega]P_{\varphi}^{(2)}$,$R_{22}$, $R_{33}$, $R_{23},R_{32}$ is given by
\[ 
\frac{1}{3}
\left(\begin{array}{*6r} 
12&6&\phantom{-}0&\phantom{-}0&-3&-3\\
6&12&0&0&0&-3\\
0&0&4&2&0&0\\
0&0&2&4&0&0\\
-3&0&0&0&4&2\\
-3&-3&0&0&2&4
\end{array} \right).
\] 
Its determinant is $2^2\cdot3$.  Since $2^{4}/3^{2}\cdot\det\Hom_{k}(E_{1},E_{2})=2^{4}/3^{2}\cdot\det\left(\frac{1}{2}\begin{pmatrix} 6 & 3 \\ 3 & 6 \end{pmatrix}\right)=2^2\cdot3$, the above sections generate $F_{E_{1},E_{2}}^{(2)}(\Qbar(t))$ by Theorem~\ref{thm:F2-Shioda}.

\subsection{The singular $K3$ surface $X_{[3,2,3]}$}\label{X323}
The transcendental lattice of the singular $K3$ surface $X_{[3,2,3]}$ is given by the matrix
\[
  \left(
    \begin{array}{cc}
      6 & 2\\
      2 & 6
    \end{array}
  \right).
\]
Then, elliptic curves $E_{1}$ and $E_{2}$ are  given by $\mathbf{C}/ \mathbf{Z} \oplus \tau_1 \mathbf{Z}$ and $\mathbf{C}/ \mathbf{Z} \oplus \tau_2 \mathbf{Z}$, where
\[
  \tau_1 = \frac{-1+2\sqrt{-2}}{3}, \quad 
  \tau_2 = 1+2\sqrt{-2}.
\]
The $j$-invariants of $E_{1}$ and $E_{2}$ are given by
\[
j(E_{1})= 26125000-18473000\sqrt{2},\quad
j(E_{2})= 26125000+18473000\sqrt{2}.
\]
We thus work on the base field $k=\mathbf{Q}(\sqrt{2})$.  We choose $E_{1}$ and $E_{2}$ such that their Weierstrass forms are given by
\begin{align*}
E_{1}: y_{1}^{2} = x_{1}^3+6(3-\sqrt{2})x_{1}^2+9(3+2\sqrt{2}+3)x_{1}, \\
E_{2}: y_{2}^{2} = x_{2}^3+6(3+\sqrt{2})x_{2}^2+9(3-2\sqrt{2}+3)x_{2}.
\end{align*}
They are so-called $\mathbf{Q}$-curves; they are Galois conjugates and isogenous to each other. 

Both $E_{1}$ and $E_{2}$ have complex multiplication by $\mathbf{Z}[1+2\sqrt{-2}]$.  Let $K=\mathbf{Q}(\sqrt{-2})$, and let $H=kK=\mathbf{Q}(\sqrt{2},\sqrt{-2})$ be the Hilbert class field of $K$. 
They admit two isogenies $\varphi_{1}, \varphi_{2}:E_{1}\to E_{2}$ of degree~$3$ defined over $H$ given by
\[
  \varphi_1(x_1, y_1) 
  = (\varphi_{1,x}(x_1), \varphi_{1,y}(x_1) y_1), \quad
  \varphi_2(x_1,y_1)
  = (\varphi_{2,x}(x_1), \varphi_{2,y}(x_1) y_1),
\]
where
\begin{align*}
&\varphi_{1,x}(x_{1}) = \frac{(1-2\sqrt{-2})x_{1}(x_{1}+3-3\sqrt{-2})^2}
{9(x_{1}+3+3\sqrt{-2}+2\sqrt{2}+4\sqrt{-1})^2},
\\
&\varphi_{2,x}(x_{1}) = \frac{(1+2\sqrt{-2})x_{1}(x_{1}+3+3\sqrt{-2})^2}
{9(x_{1}+3-3\sqrt{-2}+2\sqrt{2}-4\sqrt{-1})^2}.
\end{align*}
The isogeny $\varphi_i$ yields a $H$-rational section $P_{\varphi_{i}}^{+}$ of $F^{(6)}_{E_{1},E_{2}}$ for $i=1,2$.  

Let $\hat \varphi_{i}:E_{2}\to E_{1}$ be the dual isogeny of $\varphi_{i}$ for $i=1,2$.  The endomorphisms $\hat\varphi_{2}\circ\varphi_{1}$ and $\hat\varphi_{1}\circ\varphi_{2}$ correspond to the complex multiplication of $E_{1}$ by $1\pm 2\sqrt{-2}$.

The surface $F_{E_{1},E_{2}}^{(6)}$ is defined over $k$ and given by
\[
F_{E_{1},E_{2}}^{(6)}:
Y^2 = X^3 + \frac{575}{12}X
+\biggl(\frac{u^{6}}{(1-\sqrt{2})^3}-\frac{34937}{108}
-\frac{(1-\sqrt{2})^3}{u^6}
\biggr).
\]
By letting $s'=s/(1-\sqrt{2})^{3}=u^{1/6}/(1-\sqrt{2})^{3}$, the Weierstrass equation of the surface $F_{E_{1},E_{2}}^{(1)}$ is given by 
\[
F_{E_{1},E_{2}}^{(1)}:
Y^2 = X^3 + \frac{575}{12}X
+\Bigl(s'-\frac{34937}{108}
-\frac{1}{s'}
\Bigr).
\]
With this model, the $X$-coordinate of the section $P^{(1)}_{\varphi_{1}}=P_{\varphi_{1}}^{+}-P_{\varphi_{1}}^{-}$ of $F_{E_{1},E_{2}}^{(1)}$ (see \S5) is given by
\[
X(P_{\varphi_{1}}^{(1)})=
\frac{-\sqrt{-1}}
{12(2-\sqrt{-1})^8(S+2\sqrt{-1})}
(3S'^3 +c_{2}S'^2 + c_{1}S' + c_{0}),
\]
where
\begin{gather*}
S'=s'-\frac{1}{s'}, \quad
c_{2}=-42(23-10\sqrt{-1}),\quad
c_{1}= 2(9402-13685\sqrt{-1}),\\
c_{0}= - 4(61663+50160\sqrt{-1}).
\end{gather*}
The $Y$-coordinate can be obtained easily, but it is rather complicated and we do not include here. The section $P_{\varphi_{2}}^{(1)}$ is the image of $P_{\varphi_{1}}^{(1)}$ under the complex conjugate $\sqrt{-1}\mapsto -\sqrt{-1}$.  As in the previous example, the height matrix with respect to the sections $P_{\varphi_{1}}^{(1)}$ and $P_{\varphi_{2}}^{(1)}$ is given by 
\(\left( \begin{array}{rr} 6 & 2\\ 2 & 6 \end{array} \right)\), and they form a basis of the Mordell-Weil lattice $F_{E_{1},E_{2}}^{(1)}(\bar k(s))=F_{E_{1},E_{2}}^{(1)}(H(s))$.

Let $L=H\bigl(\sqrt{1-\sqrt{2}}\bigr)=\mathbf{Q}\bigl(\sqrt{1-\sqrt{2}},\sqrt{-1}\bigr)$.  Then, all the $2$-torsion points of $E_{1}$ and $E_{2}$ are defined over $L$,  and thus, the field of definition of $F_{E_{1},E_{2}}^{(2)}(\bar k(t))$ is~$L$.  If we let $t'=u^{1/3}/(1-\sqrt{2})^{3/2}$, the Weierstrass equation of $F_{E_{1},E_{2}}^{(2)}$ is given by
\[
F_{E_{1},E_{2}}^{(2)}:
Y^2 = X^3 + \frac{575}{12}X
+\Bigl(t'^{2}-\frac{34937}{108}
-\frac{1}{t'^{2}}
\Bigr).
\]
Let $P_{\varphi_{i}}^{(2)}=P_{\varphi_{i}}^{+}-P_{\Obar}$ for $i=1,2$ (see \S\ref{sec:F2}).  The $X$-coordinate of the section $P_{\varphi_{1}}^{(2)}$ is given by
\[
X(P_{\varphi_{1}}^{(2)}) = 
\frac{-(1+\sqrt{-1})}{(2(1+2\sqrt{-1})^2)}
\Bigl(T^2 +c_{1} T +c_{0}\Bigr),
\]
where 
\begin{align*}
&T=t+\sqrt{-1}/t,\\
&c_{1}=\sqrt{1-\sqrt{2}}\Bigl(9+13\sqrt{-1}-2\sqrt{2}+11\sqrt{-2}\Bigr),\quad
c_{0}=\frac{1}{6}\bigl(161-97\sqrt{-1}\bigr).  
\end{align*}
Since $P_{\Obar}$ is defined over $k=\mathbf{Q}(\sqrt{2})$, $P_{\varphi_{2}}^{(2)}$ is also the image of  $P_{\varphi_{1}}^{(2)}$ by the complex conjugate $\sqrt{-1}\mapsto -\sqrt{-1}$.

The explicit formula for the sections $R_{22}$ described in Theorem~\ref{thm:F2} is
\[
R_{22}=\Bigl(
(1-2\sqrt{-1})(2+\sqrt{2}-\sqrt{-2})\sqrt{1-\sqrt{2}}
-\frac{(1-2\sqrt{-1})^{4}}{6}, 
T\Bigr).
\]
Let $\sigma\in\Gal(L/\mathbf{Q})$ be the automorphism given by $\sqrt{1-\sqrt{2}} \mapsto -\sqrt{1-\sqrt{2}}$, and $\gamma$ the complex conjugate $\sqrt{-1}\mapsto \sqrt{-1}$.  Then, we have
\[
R_{33}=-\sigma(R_{22}), \quad 
R_{23}=\gamma(R_{22}), \quad
R_{32}=\gamma(R_{33}).
\]

The height matrix with respect to $P_{\varphi_{1}}^{(2)}$, $P_{\varphi_{2}}^{(2)}$, $R_{22}$, $R_{33}$, $R_{23}$ and $R_{32}$ is given by
\[
\frac{1}{3}\left(\begin{array}{*6r}
12&3&0&0&-3&-3\\ 
3&12&-3&-3&0&0\\ 
0&-3&4&2&0&0\\
0&-3&2&4&0&0\\
-3&0&0&0&4&2\\
-3&0&0&0&2&4
\end{array}\right).
\]
Its determinant is $2^7/3^2=2^{4}/3^{2}\cdot\det\Hom_{k}(E_{1},E_{2})$, and we can see that the above sections generate $F_{E_{1},E_{2}}^{(2)}(\bar k(t))$ as in the previous example.

\subsection{The singular $K3$ surface $X_{[3,0,3]}$}\label{X303}

The transcendental lattice of the singular $K3$ surface
$X_{[3,0,3]}$ is given by the
matrix
\[
  \left(
    \begin{array}{cc}
      6 & 0\\
      0 & 6
    \end{array}
  \right).
\]
In this case, the complex elliptic curves $E_{1}/\mathbf{C}$ and
$E_{2}/\mathbf{C}$ are given by
\[
  E_{1} : \mathbf{C}/\mathbf{Z} \oplus \mathbf{Z}(3\sqrt{-1}), 
  \quad 
  E_{2} : \mathbf{C}/\mathbf{Z} \oplus \mathbf{Z}\sqrt{-1}.
\]
Their $j$-invariants are given by
\[
	j(E_{1})=76771008+44330496\sqrt{3}, \quad
 	j(E_{2})=1728.
\]
We work with the base field $k=\mathbf{Q}(\sqrt{3})$, and we take the following Weierstrass forms, which corresponds to the formula \eqref{eq:E1andE2} with $a=9(2+\sqrt{3})$, $b=(1-\sqrt{3})/3$. (The equation of $E_{2}$ is scaled differently from \eqref{eq:E1andE2}.)
\[\setlength{\arraycolsep}{2pt}
  \begin{array}{lrcl}
    E_{1} : & y_1^2 &= &x_{1}^3+(2+\sqrt{3})(3x_{1}-1+\sqrt{3})^2\\
    E_{2} : & y_2^2 &= &x_{2}^3-27(2+\sqrt{3})(x_{2}-9-3\sqrt{3})^2  
    \end{array}
\]
The curves $E_{1}$ and $E_{2}$ are isogenous with complex multiplication by $\mathbf{Q}(\sqrt{-1})$, and there are two $3$-isogenies from $E_{1}$ to $E_{2}$. One is obtained by a coordinate changes of the $3$-isogeny $\varphi$ in \S\ref{3iso}. We denote it by $\varphi_1$. The other, denoted by $\varphi_2$, is a composite of $\varphi_1$ and the complex multiplication $[\sqrt{-1}]$ on $E_{2}$.  With our equation of $E_{2}$, the multiplication-by-$\sqrt{-1}$ map on $E_{2}$ is given by $(x_{2},y_{2}) \mapsto (-x_{2}+36+18\sqrt{3},\sqrt{-1}y_{2})$.

The $3$-isogenies $\varphi_1, \varphi_2$ are given by
\begin{align*}
&\varphi_{1}(x_1, y_1) = \left( \varphi_{x}(x_1), \varphi_{y} (x_1)y_1 \right), \\
&\varphi_{2}(x_1, y_1) = \left( -\varphi_{x}(x_1)+36+18\sqrt{3}, 
\sqrt{-1} \varphi_{y}(x_1) y_1 \right),
\end{align*}
where
\[\label{Phi1xy}
  \begin{aligned}
  &\varphi_{x}(x_1) = 
  \frac{x_{1}^3+12(2+\sqrt{3})x_{1}^2+12(1+\sqrt{3})x_{1}+8}
  							{x_{1}^2},\\
  &\varphi_{y}(x_1) = -\frac{x_{1}^3-12(1+\sqrt{3})x_{1}-16
    {x_{1}^3}}.
  \end{aligned}
\]
Over the base field $k$, the equation of $F_{E_{1}, E_{2}}^{(6)}$ is given by
\[\label{eq:F6-6006}
F_{E_{1}, E_{2}}^{(6)}: Y^2 = 
X^3 - (387+224\sqrt{3})X 
+ 3(3+2\sqrt{3})u^{6}+\frac{45+26\sqrt{3}}{9u^{6}},
\]
after replacing $u$ by $3u$.  By letting $s'=s(3+2\sqrt{3})/9=u^{6}(3+2\sqrt{3})/9$, the equation of $F_{E_{1},E_{2}}^{(1)}$ is given by 
\[
F_{E_{1},E_{2}}^{(1)}:
Y^2 = X^3 -(387+224\sqrt{3})X
+(7+4\sqrt{3})\Bigl(s'+\frac{1}{s'}\Bigr).
\]
Using the method described in \S\ref{sec:F1}, the section $P_{\varphi_{1}}^{(1)}$ of $F_{E_{1},E_{2}}^{(1)}$ is given by
\[
P_{\varphi_{1}}^{(1)}=
\biggl(\frac{c_{3}S'^{3}+c_{2}S'^{2}+c_{1}S' +c_{0}}
    {144(S'+2)},
\frac{s'(s'-1)\bigl(d_{4}S'^{4} + d_{3}S'^{3} +d_{2}S'^{2} +d_{1}S' +d_{0}\bigr)}{1728\left(s'+1\right)^3}
    \biggr),
\]
where 
\begin{align*}
&S'=s'+\frac{1}{s'},\\
&c_{3}=(2-\sqrt{3})^{2},\quad
c_{2}=- 42,\quad
c_{1}=12(91+36\sqrt{3}),\quad
c_{0}=- 8(1267+680\sqrt{3}),\\
&d_{4}=(2-\sqrt{3})^{3},\quad
d_{3}=-4(25-12\sqrt{3}),\quad
d_{2}=24(107+15\sqrt{3}),
\\
&d_{1}=16(461+444\sqrt{3}),\quad
d_{0}=-16(54676+32091\sqrt{3}).
\end{align*}
If we write $P_{\varphi_{1}}^{(1)}=(x(s'), y(s'))$, then we have $P_{\varphi_{2}}^{(1)}=(-x(-s'), \sqrt{-1}\, y(-s'))$. The height matrix with respect to the sections $P_{\varphi_{1}}^{(1)}$ and $P_{\varphi_{2}}^{(1)}$ is given by
\(\left(
    \begin{array}{cc}
      6 & 0\\
      0 & 6
    \end{array}
  \right)\), and they form a basis of $F^{(1)}_{E_{1}, E_{2}}(\bar k(s))=F^{(1)}_{E_{1}, E_{2}}(\mathbf{Q}(\sqrt{3},\sqrt{-1})(s))$.  

Next, we consider $F_{E_{1},E_{2}}^{(2)}(\bar k(t))$.  The field of definition of $2$-torsion subgroups $E_{1}[2]$ and $E_{2}[2]$ is $\mathbf{Q}(\sqrt{3+2\sqrt{3}})$.  In order to compute generators of $F_{E_{1},E_{2}}^{(2)}(\bar k(t))$, we need to work with the field $L=\mathbf{Q}\bigl(\sqrt{3+2\sqrt{3}},\sqrt{-1}\bigr)$.

The explicit formulas for the sections $P_{\varphi_{1}}^{(2)}$, $P_{\varphi_{2}}^{(2)}$, $R_{22}$, $R_{33}$, $R_{23},R_{32}$ described in Theorem~\ref{thm:F2} are as follows:
\begin{align*}
P_{\varphi_{1}}^{(2)}
&=\biggl(\frac{1}{6}
\Bigl(\sqrt{3}\,T_{+}^2+6(1+\sqrt{3})\,T_{+}+42+20\sqrt{3}\Bigr), 
\\ &\hspace{2.4cm}
\frac{\alpha}{12} T_{-}
\Bigl((1-\sqrt{3})\,T_{+}^2-6\sqrt{3}\alpha\, T_{+}
+112+56\sqrt{3}\Bigr)\biggr)
\\
P_{\varphi_{2}}^{(2)}
&=\biggl(\frac{1}{6}
\Bigl(\sqrt{3}\,T_{-}^2+6\sqrt{-1}(1+\sqrt{3})\,T_{-}
-42-20\sqrt{3}\Bigr), 
\\ &\hspace{2cm}
-\frac{\alpha}{12} T_{+}
\Bigl(-(1-\sqrt{3})\,T_{+}^2-6\sqrt{-3}\alpha\, T_{+}
-112-56\sqrt{3}\Bigr)\biggr)
\\
R_{22}&=\left(-7-4\sqrt{3}+2(1+\sqrt{3})\alpha, 
(2+\sqrt{-3})T_{+}\right), 
\\
R_{33}&=\left(-7-4\sqrt{3}-2(1+\sqrt{3})\alpha, 
-(2+\sqrt{-3})T_{+}\right),
\\
R_{23}&=\left(7+4\sqrt{3}-2(1+\sqrt{3})\alpha, 
(2+\sqrt{-3})T_{-}\right), 
\\
R_{32}&=\left(7+4\sqrt{3}+2(1+\sqrt{3})\alpha, 
-(2+\sqrt{-3})T_{-}\right),
\end{align*}
where
\[
\alpha = \sqrt{3+2\sqrt{3}}, \quad
T_{+} = \frac{3t}{\alpha} + \frac{\alpha}{3t}, \quad
T_{-} = \frac{3t}{\alpha} - \frac{\alpha}{3t}.
\]
The height matrix with respect to $P_{\varphi_{1}}^{(2)}$, $P_{\varphi_{2}}^{(2)}$,$R_{22}$, $R_{33}$, $R_{23},R_{32}$ is given by
\[ 
\frac{1}{3}
\left(\begin{array}{*6r} 
12&0&\phantom{-}0&\phantom{-}0&-3&-3\\
0&12&-3&-3&0&0\\
0&-3&4&2&0&0\\
0&-3&2&4&0&0\\
-3&0&0&0&4&2\\
-3&0&0&0&2&4
\end{array} \right).
\] 
Its determinant is $2^4=2^{4}/3^{2}\cdot\det\Hom_{k}(E_{1},E_{2})$ as expected, and we can see that the above sections generate $F_{E_{1},E_{2}}^{(2)}(\bar k(t))$ as before.

\def\arXiv#1{arXiv:\href{http://arXiv.org/abs/#1}{#1}}

\begin{thebibliography}{10}

\bibitem{Inose}
H.~Inose, \emph{Defining equations of singular {$K3$} surfaces and a
  notion of isogeny}, Proceedings of the {I}nternational {S}ymposium on
  {A}lgebraic {G}eometry ({K}yoto {U}niv., {K}yoto, 1977), Kinokuniya Book
  Store, Tokyo, 1978, pp.~495--502. \MR{578868}

\bibitem{Kumar-Kuwata} A.~Kumar and M.~Kuwata, \emph{Elliptic K3
    surfaces associated with the product of two elliptic curves:
    Mordell-Weil lattices and their fields of definition}, Nagoya
  Math. J. \textbf{228} (2017), 124--185. \MR{3721376}

\bibitem{Kuwata:can-height}
M.~Kuwata, \emph{The canonical height and elliptic surfaces}, J. Number
  Theory \textbf{36} (1990), no.~2, 201--211. \MR{1072465}

\bibitem{Kuwata:MW-rank}
\bysame, \emph{Elliptic {$K3$} surfaces with given {M}ordell-{W}eil rank},
  Comment. Math. Univ. St. Paul. \textbf{49} (2000), no.~1, 91--100.
  \MR{1777156}

\bibitem{Kuwata-Shioda}
M.~Kuwata and T.~Shioda, \emph{Elliptic parameters and defining
  equations for elliptic fibrations on a {K}ummer surface}, Algebraic geometry
  in {E}ast {A}sia---{H}anoi 2005, Adv. Stud. Pure Math., vol.~50, Math. Soc.
  Japan, Tokyo, 2008, pp.~177--215. \MR{2409557}

\bibitem{Shioda-Inose}
T.~Shioda and H.~Inose, \emph{On singular {$K3$} surfaces}, Complex analysis
  and algebraic geometry, Iwanami Shoten, Tokyo, 1977, pp.~119--136.
  \MR{0441982}

\bibitem{Shioda:Kummer-sandwich}
T.~Shioda, \emph{Kummer sandwich theorem of certain elliptic {$K3$}
  surfaces}, Proc. Japan Acad. Ser. A Math. Sci. \textbf{82} (2006), no.~8,
  137--140. \MR{2279280}

\bibitem{Shioda:correspondence}
\bysame, \emph{Correspondence of elliptic curves and {M}ordell-{W}eil lattices
  of certain elliptic {$K3$}'s}, Algebraic cycles and motives. {V}ol. 2, London
  Math. Soc. Lecture Note Ser., vol. 344, Cambridge Univ. Press, Cambridge,
  2007, pp.~319--339. \MR{2385296}

\bibitem{Shioda:F^(5)}
\bysame, \emph{The {M}ordell-{W}eil lattice of {$y^2=x^3+t^5-1/t^5-11$}},
  Comment. Math. Univ. St. Pauli \textbf{56} (2007), no.~1, 45--70.
  \MR{2356749}

\bibitem{Top}
J.~Top, \emph{Descent by {$3$}-isogeny and {$3$}-rank of quadratic fields},
  Advances in number theory ({K}ingston, {ON}, 1991), Oxford Sci. Publ., Oxford
  Univ. Press, New York, 1993, pp.~303--317. \MR{1368429}

\end{thebibliography}
\providecommand{\bysame}{\leavevmode\hbox to3em{\hrulefill}\thinspace}
\providecommand{\MR}{\relax\ifhmode\unskip\space\fi MR }
\providecommand{\MRhref}[2]{%
  \href{http://www.ams.org/mathscinet-getitem?mr=#1}{#2}
}
\providecommand{\href}[2]{#2}

\end{document}